\newtheorem{theorem}{Theorem}[section]
\newtheorem{lemma}[theorem]{Lemma}
\newtheorem{rem}{Remark}
\newtheorem{defn}{Definition}
\newcommand{\grtype}{\mathop{\mbox{$\mathrm{A}$}}}
\newcommand{\partype}{\mathop{\mbox{$\mathrm{B}$}}}
\newcommand{\spec}{\mathop{\mathrm{Spec}}}
\newcommand{\adj}{\mathop{\mathrm{Adj}}}
\begin{document}

\title{Main functions and the spectrum of super graphs}
\author{G. Arunkumar\\
{\small Department of Mathematics,  Indian Institute of Technology
Madras, Chennai - 600036, India
}\\
{\small Email: arun.maths123@gmail.com}\\
Peter J. Cameron\footnote{Corresponding author}\\
{\small School of Mathematics and Statistics, University of St Andrews, Fife
KY16 9SS, UK}\\
{\small Email: pjc20@st-andrews.ac.uk}\\
R. Ganeshbabu\\
{\small Department of Mathematics,  Indian Institute of Technology
Madras, Chennai - 600036, India}\\
{\small Email: ma22d011@smail.iitm.ac.in}\\
Rajat Kanti Nath\\
{\small Department of Mathematical Sciences, Tezpur University, Sonitpur, Assam 784028, India.}\\
{\small Email: rajatkantinath@yahoo.com}
}
\date{}
\maketitle

\begin{abstract}
Let $\grtype$ be a graph type and $\partype$ an equivalence relation on a group $G$. Let $[g]$ be the equivalence class of $g$ with respect to the equivalence relation $\partype$. The $\partype$ super$\grtype$ graph of $G$ is an undirected graph whose vertex set is $G$ and two distinct  vertices $g, h \in G$ are adjacent  if $[g] = [h]$ or there exist $x \in [g]$ and $y \in [h]$ such that $x$ and $y$ are adjacent in the $\grtype$ graph of $G$.
In this paper, we compute  spectrum of equality/conjugacy supercommuting graphs of dihedral/dicyclic groups and show that these graphs are not integral.  
\end{abstract}

\noindent \textbf{Keywords} Supercommuting graph, spectrum, main function, dihedral group, dicyclic group

\noindent \textbf{Mathematics Subject Classification} 05C25, 20D99

 \section{Introduction}

This is a continuation of our work on Super graphs on groups done in \cite{acns} and \cite{part2}. In these earlier works, we introduced and studied three types of graphs,
and three equivalence relations defined on a group, viz.\ the power graph,
enhanced power graph, commuting graph, and the relations of equality,
conjugacy, and same order; for each choice of a graph type $\grtype$ and
an equivalence relation $\partype$, there is a graph, the \emph{$\partype$ 
super$\grtype$ graph} defined on $G$. Two distinct vertices $g, h \in G$ are adjacent in $\partype$ 
super$\grtype$ graph if $[g] = [h]$ or there exist $x \in [g]$ and $y \in [h]$ such that $x$ and $y$ are adjacent in the $\grtype$ graph of $G$, where $[g]$ is the equivalence class of $g$ with respect to the equivalence relation $\partype$.
For other terminologies and notations regarding graphs and supergraphs related to groups, we refer \cite{acns} and \cite{gong}. In the present work, we shall study the spectra of \emph{$\partype$ 
super$\grtype$ graphs} defined on a group $G$. The study of spectra of graphs associated with various algebraic structures is a widely explored area. For instance,  see \cite{DN-2017,BN2021,powergraphs} for  commuting graphs, commuting conjugacy class graphs and power graphs of finite groups; \cite{lovasz,paul} for Cayley graphs of finite groups; \cite{katja} for  zero-divisor graphs of finite commutative rings; \cite{FNS-2020} for commuting graphs of finite rings; \cite{fixing} for  1-point fixing graph etc. It is worth mentioning that Dalal et al. \cite{DMP-2023} also computed the spectrum and Laplacian spectrum of order/conjugacy supercommuting graphs of certain groups.

In \cite{part2}, it was observed that super graphs  are generalized compositions of complete graphs. Thus, to study the spectra of a \emph{$\partype$ 
super$\grtype$ graph},  results on  the spectra of generalized compositions of complete graphs are useful.
In \cite{cfmr}, Cardoso et al. express the adjacency spectra of a generalized composition of regular graphs in terms of the adjacency spectra of the factor graphs and the determinant of a quotient matrix. Despite this result, explicit computations of the spectra of generalized compositions, especially in the quotient matrix, tend to get difficult.

One important concept in the study of spectra is that of a \emph{main} eigenvalue of a graph, introduced by Cvetkovi\'c in \cite{cvet1}. An eigenvalue $\lambda$ of a graph $\Gamma$ is a main eigenvalue if it has an eigenvector which is not orthogonal to the all-one vector $\mathbf{1}_n$, where $n={\lvert V(\Gamma) \rvert}$, where $V(\Gamma)$ is the set of vertices of $\Gamma$. For a survey on the main eigenvalues of a graph, see \cite{rowlinson}. Closely related to main eigenvalues of a graph $\Gamma$ is the function $\mathbf{1}_{n}^t (\lambda I_n-A(\Gamma))^{-1}\mathbf{1}_{n}$ introduced as the \emph{coronal} by Mcleman and Mcnicholas in \cite{leman}. The coronal plays a crucial role in determining the spectra of graphs arising from various graph operations\cite{ref1,ref2,ref3,ref4}. In \cite{sma21}, Saravanan et al. generalize the coronal of a graph to the concept of a \emph{main} function: The main function associated with an $n \times n$ matrix $M$, corresponding to two $n \times 1$ vectors $u, v$ is the function $v^t (\lambda I_{n} -M )^{-1}u$. They proved that the spectra of an arbitrary generalized composition are determined completely by the spectra of the factor graphs and the associated main functions\cite[Theorem 4]{sma21}. These main functions pave the way for the effective computation of the spectra of interest in this paper. 

In Section \ref{shjoin}, we recall results related to spectrum of generalized composition of regular graphs and main functions.   
In Section \ref{sspec}, using main function techniques, we explicitly compute the adjacency spectrum of the class of equality supercommuting  and conjugacy supercommuting graphs of dihedral groups and dicyclic groups. 
It can be seen that these graphs are not integral.

\section{Notation and auxiliary results}\label{shjoin}
Let $\mathcal{H}$ be a graph and $V(\mathcal{H}) = \{1,\dots,k\}$. Let $\Gamma_1,\dots,\Gamma_k$ be a collection of graphs with  $V(\Gamma_i) = \{v_i^{1},\dots,v_i^{n_i}\}$ for $1 \le i \le k$. Then $\mathcal{H}$-join (also known as generalized composition) of the graphs $\Gamma_1, \dots, \Gamma_k$, denoted by $\mathcal{G} =:\mathcal{H}[\Gamma_1,\dots,\Gamma_k]$,  is a graph whose 
vertex set is $V(\Gamma_1) \sqcup \cdots \sqcup V(\Gamma_k)$ and two vertices $v_i^p$ and $v_j^q$ of $\mathcal{G}$ are adjacent if one the following conditions is satisfied:
\begin{enumerate}
	\item $i=j$, and $v_i^p$ and $v_j^q$ are adjacent vertices in $\Gamma_i$.
	\item $i \ne j$ and $i$ and $j$ are adjacent in $\mathcal{H}$. 
\end{enumerate}
This generalized composition of graphs introduced by  Schwenk \cite{sch} is also known as generalized lexicographic product and joined union (see \cite{accv,gerbaud,mwg,ww}). The induced subgraph on the union of the vertex sets of $\Gamma_i$ and $\Gamma_j$ is their disjoint union $\Gamma_1\sqcup\Gamma_2$ if $i\not\sim j$ in $\mathcal{H}$, or the join $\Gamma_1\vee\Gamma_2$ (with all edges between $V(\Gamma_1)$ and $V(\Gamma_2)$) if $i\sim j$ in $\mathcal{H}$.

Let $\mathcal{A}(\Gamma)$ be the adjacency matrix of $\Gamma$ and $\spec(\Gamma) := \spec(\mathcal{A}(\Gamma))$ be the spectrum of $\Gamma$, the multiset of eigenvalues of $\mathcal{A}(\Gamma)$. The following result of Cardoso \cite{cfmr}  gives the spectrum of $\mathcal{H}[\Gamma_1,\dots,\Gamma_k]$, to some extent.
\begin{theorem} \label{cardaso}\cite{cfmr} 
Let $\mathcal{G} = \mathcal{H}[\Gamma_1,\dots,\Gamma_k]$ where $V(\mathcal{H}) = \{1, \dots, k\}$, $|V(\Gamma_i)| = n_i$ and $\Gamma_i$ is  $r_i$-regular for $1 \le i \le k$.
For $1 \le i \le j \le k$, define $\rho_{i,j} = 1$  if  $\{i,j\}$ is an edge in $\mathcal{H}$ and $0$ otherwise. 
Then $r_i$ is an eigenvalue of $\mathcal{A}(\Gamma_i)$ and
$$
\spec(\mathcal{G}) = \bigg(\bigcup_{i=1}^k \big( \spec(\Gamma_i)\backslash \{r_i\} \big) \bigg)\cup \spec(\widetilde{A}(\mathcal{G})),
$$
where $\widetilde{A}(\mathcal{G})= \begin{bmatrix}
                r_1 & \sqrt{n_1n_2}\rho_{1,2}  & \cdots & \sqrt{n_1n_k}\rho_{1,k}  \\
                \sqrt{n_2n_1}\rho_{2,1}  & r_2 & \cdots & \sqrt{n_2n_k}\rho_{2,k} \\
                \vdots & \vdots & \ddots & \vdots  \\
                \sqrt{n_kn_1}\rho_{k,1} & \sqrt{n_kn_2}\rho_{k,2} & \cdots &r_k
        \end{bmatrix}.$
\end{theorem}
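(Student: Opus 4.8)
The plan is to run the standard equitable-partition / quotient-matrix argument, adapted so that the quotient comes out symmetric. Concretely, I would exhibit an orthogonal decomposition $\mathbb{R}^{|V(\mathcal{G})|}=\widehat{W}\oplus U$ into two $\mathcal{A}(\mathcal{G})$-invariant subspaces, show that $\mathcal{A}(\mathcal{G})$ restricted to $\widehat{W}$ has spectrum $\bigcup_{i=1}^k\big(\spec(\Gamma_i)\setminus\{r_i\}\big)$, and show that $\mathcal{A}(\mathcal{G})$ restricted to the $k$-dimensional space $U$ is represented by $\widetilde{A}(\mathcal{G})$. First I would record the block form of $\mathcal{A}(\mathcal{G})$ with respect to the partition $V(\Gamma_1)\sqcup\cdots\sqcup V(\Gamma_k)$: the $(i,i)$ block is $\mathcal{A}(\Gamma_i)$, and for $i\ne j$ the $(i,j)$ block is $\rho_{i,j}J_{n_i\times n_j}$, where $J_{n_i\times n_j}$ denotes the all-ones matrix, since by the definition of the $\mathcal{H}$-join every vertex of $\Gamma_i$ is adjacent to every vertex of $\Gamma_j$ precisely when $\{i,j\}\in E(\mathcal{H})$.

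Next I would treat the ``non-main'' part. Fix $i$ and let $W_i\subseteq\mathbb{R}^{n_i}$ be the orthogonal complement of $\mathbf{1}_{n_i}$. Since $\Gamma_i$ is $r_i$-regular, $\mathbf{1}_{n_i}$ is an eigenvector of the symmetric matrix $\mathcal{A}(\Gamma_i)$ with eigenvalue $r_i$; hence $r_i\in\spec(\Gamma_i)$, $W_i$ is $\mathcal{A}(\Gamma_i)$-invariant, and, from the orthogonal $\mathcal{A}(\Gamma_i)$-invariant splitting $\mathbb{R}^{n_i}=\mathbb{R}\mathbf{1}_{n_i}\oplus W_i$, the multiset of eigenvalues of $\mathcal{A}(\Gamma_i)|_{W_i}$ is exactly $\spec(\Gamma_i)\setminus\{r_i\}$. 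For $w\in W_i$ let $\widehat{w}\in\mathbb{R}^{|V(\mathcal{G})|}$ be the vector with $w$ in the $i$-th block and zeros elsewhere. Then the $i$-th block of $\mathcal{A}(\mathcal{G})\widehat{w}$ is $\mathcal{A}(\Gamma_i)w$, and for $j\ne i$ the $j$-th block is $\rho_{i,j}J_{n_j\times n_i}w=\rho_{i,j}(\mathbf{1}_{n_i}^t w)\mathbf{1}_{n_j}=0$ because $w\perp\mathbf{1}_{n_i}$. So $\widehat{W}:=\widehat{W_1}\oplus\cdots\oplus\widehat{W_k}$ is $\mathcal{A}(\mathcal{G})$-invariant of dimension $\sum_i(n_i-1)$, and the spectrum of the restriction is $\bigcup_{i=1}^k\big(\spec(\Gamma_i)\setminus\{r_i\}\big)$.

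It then remains to analyze $\mathcal{A}(\mathcal{G})$ on $U:=\widehat{W}^{\perp}$, the $k$-dimensional span of the orthonormal vectors $e_1,\dots,e_k$, where $e_i$ has $\tfrac{1}{\sqrt{n_i}}\mathbf{1}_{n_i}$ in the $i$-th block and zeros elsewhere. Computing: the $i$-th block of $\mathcal{A}(\mathcal{G})e_i$ is $\tfrac{1}{\sqrt{n_i}}\mathcal{A}(\Gamma_i)\mathbf{1}_{n_i}=\tfrac{r_i}{\sqrt{n_i}}\mathbf{1}_{n_i}$, which is the $i$-th block of $r_ie_i$; and for $j\ne i$ the $j$-th block is $\tfrac{1}{\sqrt{n_i}}\rho_{i,j}J_{n_j\times n_i}\mathbf{1}_{n_i}=\tfrac{n_i}{\sqrt{n_i}}\rho_{i,j}\mathbf{1}_{n_j}=\sqrt{n_in_j}\,\rho_{i,j}\cdot\tfrac{1}{\sqrt{n_j}}\mathbf{1}_{n_j}$, which is the $j$-th block of $\sqrt{n_in_j}\,\rho_{i,j}\,e_j$. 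Hence $\mathcal{A}(\mathcal{G})e_i=r_ie_i+\sum_{j\ne i}\sqrt{n_in_j}\,\rho_{i,j}\,e_j$, so $U$ is $\mathcal{A}(\mathcal{G})$-invariant and the matrix of $\mathcal{A}(\mathcal{G})|_U$ in the orthonormal basis $\{e_i\}$ is precisely $\widetilde{A}(\mathcal{G})$ (which is symmetric because $\rho_{i,j}=\rho_{j,i}$), contributing $\spec(\widetilde{A}(\mathcal{G}))$ to $\spec(\mathcal{G})$. Combining the spectra over the orthogonal decomposition $\mathbb{R}^{|V(\mathcal{G})|}=\widehat{W}\oplus U$ yields the stated formula.

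The main point to get right — the only place where something could go wrong — is the bookkeeping of multiplicities: one must be sure that passing from $\mathcal{A}(\Gamma_i)$ to $\mathcal{A}(\Gamma_i)|_{W_i}$ deletes exactly one copy of $r_i$ (relevant when $\Gamma_i$ is disconnected and $r_i$ is not simple). This is exactly what the orthogonal splitting $\mathbb{R}^{n_i}=\mathbb{R}\mathbf{1}_{n_i}\oplus W_i$ guarantees, since $\mathbb{R}\mathbf{1}_{n_i}$ is a one-dimensional subspace of the $r_i$-eigenspace. Everything else in the argument is a direct verification of matrix–vector products.
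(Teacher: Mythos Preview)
Your argument is correct: the orthogonal $\mathcal{A}(\mathcal{G})$-invariant decomposition $\mathbb{R}^{|V(\mathcal{G})|}=\widehat{W}\oplus U$ is set up properly, the off-diagonal blocks $\rho_{i,j}J_{n_i\times n_j}$ annihilate vectors orthogonal to $\mathbf{1}_{n_i}$, regularity makes $\mathbf{1}_{n_i}$ an $r_i$-eigenvector, and the matrix of the restriction to $U$ in the orthonormal basis $\{e_i\}$ is exactly $\widetilde{A}(\mathcal{G})$. Your remark on multiplicities is also the right point to flag.

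However, there is nothing to compare against: the paper does \emph{not} prove this theorem. It is quoted verbatim from Cardoso--de~Freitas--Martins--Robbiano \cite{cfmr} (note the citation attached to the theorem header) and is used as a black box, with Theorem~\ref{t:specmain} derived from it as an immediate specialization to the case $\Gamma_i=K_{n_i}$. Your equitable-partition/invariant-subspace proof is essentially the argument given in \cite{cfmr}, so you have reconstructed the original source's proof rather than something the present paper supplies.
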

We shall use Theorem \ref{cardaso} to compute the spectrum of various super graphs of dihedral and dicyclic groups. 
We write $\Gamma_A^B$ to denote  the $\partype$ super$\grtype$ graph on $G$ associated to $\partype$, $\grtype$ and $G$. In \cite[Proposition 4.1]{part2}, it was shown that  $\Gamma_A^B$ is isomorphic to  $\Delta[K_{n_1},\ldots,K_{n_k}]$ for some graph $\Delta$ on $k$ vertices, where $n_1,\ldots,n_k$ are the sizes of the equivalence classes	of $\partype$.
Note that the graph $\Delta$ is the induced subgraph of $\Gamma_A^B$ on the set of equivalence class
representatives of $\partype$. Since  $K_{n}$ is $(n -1)$-regular  and 
$$
\spec(K_{n})\backslash \{n - 1\} = \{\underbrace{-1,\dots,-1}_{\text{$(n-1)$~times}}\},
$$ 
applying  Theorem~\ref{cardaso} to $\Gamma_A^B$ we get the spectrum of $\Gamma_A^B$ as given below.
\begin{theorem}\label{t:specmain}
The spectrum   $\Gamma_A^B = \Delta[K_{n_1}, \dots, K_{n_k}]$, where $V(\Delta) = \{1, \dots, k\}$ and $n_i \geq 1$ for $1 \leq i \leq k$, is given by
$$
\spec(\Gamma^B_A) = \bigg(\bigcup_{\substack{1 \le i \le k \\ n_i \geq 1}} \{\underbrace{-1,\dots,-1}_{\text{$(n_i-1)${\rm~times}}}\} \bigg)\cup \spec(\widetilde{\mathcal A}(\Gamma^B_A)),
$$
where $\widetilde{\mathcal A}(\Gamma^B_A)= \begin{bmatrix}
        n_1-1 & \sqrt{n_1n_2}\rho_{1,2}  & \cdots & \sqrt{n_1n_k}\rho_{1,k}  \\
        \sqrt{n_2n_1}\rho_{2,1}  & n_2 - 1 & \cdots & \sqrt{n_2n_k}\rho_{2,k} \\
        \vdots & \vdots & \ddots & \vdots  \\
        \sqrt{n_kn_1}\rho_{k,1} & \sqrt{n_kn_2}\rho_{k,2} & \cdots &n_k - 1
        \end{bmatrix}.$
\end{theorem}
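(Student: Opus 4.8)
The plan is to obtain Theorem~\ref{t:specmain} as a direct specialization of Theorem~\ref{cardaso}. By \cite[Proposition 4.1]{part2}, the graph $\Gamma_A^B$ is isomorphic to $\Delta[K_{n_1},\dots,K_{n_k}]$, where $\Delta$ is the induced subgraph of $\Gamma_A^B$ on a set of equivalence-class representatives of $\partype$ and $n_1,\dots,n_k$ are the sizes of those classes. So it suffices to apply Cardoso's result with $\mathcal{H}=\Delta$ and $\Gamma_i = K_{n_i}$, and to simplify the resulting formula using what we know about complete graphs.

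First I would record the two elementary facts about complete graphs that feed into Theorem~\ref{cardaso}: the graph $K_{n_i}$ is $(n_i-1)$-regular, so in the notation of that theorem $r_i = n_i-1$; and its spectrum is $\{n_i-1\}$ together with $-1$ repeated $n_i-1$ times, so $\spec(K_{n_i})\setminus\{r_i\}$ is exactly the multiset consisting of $-1$ with multiplicity $n_i-1$. When $n_i=1$ this multiset is empty and $r_i=0$, which is consistent with $K_1$ being the one-vertex graph (vacuously $0$-regular with spectrum $\{0\}$); hence the statement will hold for all $n_i\ge 1$.

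Substituting these into the conclusion of Theorem~\ref{cardaso}, the ``factor part'' $\bigcup_{i=1}^{k}\bigl(\spec(\Gamma_i)\setminus\{r_i\}\bigr)$ becomes precisely $\bigcup_{1\le i\le k}\{\underbrace{-1,\dots,-1}_{(n_i-1)\text{ times}}\}$, and the quotient matrix $\widetilde{A}(\mathcal{G})$ becomes the matrix $\widetilde{\mathcal A}(\Gamma_A^B)$ displayed in the statement: its $(i,i)$ entry $r_i$ is now $n_i-1$, while its off-diagonal entries $\sqrt{n_in_j}\,\rho_{i,j}$ are unchanged, with $\rho_{i,j}$ read off from the adjacency of $\Delta$. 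This yields the claimed multiset decomposition of $\spec(\Gamma_A^B)$.

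There is essentially no hard step here; the proof is a bookkeeping argument. The only points requiring a little care are keeping the multiplicities straight (the $-1$'s coming from each block $K_{n_i}$ must be counted $n_i-1$ times, not $n_i$ times, since the regular eigenvalue $r_i=n_i-1$ is removed and absorbed into $\widetilde{\mathcal A}(\Gamma_A^B)$), and the degenerate case $n_i=1$, where one checks that Theorem~\ref{cardaso} still applies and the corresponding block contributes nothing. I would also remark that the spectrum of $\widetilde{\mathcal A}(\Gamma_A^B)$ need not be disjoint from $\{-1\}$, so the union in the statement is understood as a union of multisets.
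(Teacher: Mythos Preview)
Your proof is correct and follows essentially the same approach as the paper: the paper likewise derives Theorem~\ref{t:specmain} by applying Theorem~\ref{cardaso} with $\Gamma_i=K_{n_i}$, using that $K_{n_i}$ is $(n_i-1)$-regular and $\spec(K_{n_i})\setminus\{n_i-1\}=\{-1,\dots,-1\}$ ($n_i-1$ times). Your extra remarks on the $n_i=1$ case and on reading the union as a multiset union are helpful clarifications but do not depart from the paper's argument.
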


In general it is difficult to compute $\spec(\widetilde{\mathcal A}(G))$. However,  in Section \ref{sspec}, we shall calculate $\spec(\widetilde{\mathcal A}(G))$  explicitly when $\Gamma_A^B$ is equality \emph{super$\grtype$ graph} and conjugacy \emph{super$\grtype$ graph} for dihedral groups and dicyclic group where the graph A is the commuting graph.
In this process,  the concept of \textit{main} function associated with a matrix and a few essential lemmas from \cite{sma21} are useful and these are listed below.
\begin{defn}\rm
Let $M$ be an $n \times n$ complex matrix, and let $u$ and $v$ be $ n \times 1$ complex vectors. The main function associated to the matrix $M$ corresponding to the vectors $u$ and $v$, denoted by $\Gamma_M(u,v)$, is defined to be $\Gamma_M(u,v) = v^{t}(\lambda I - M)^{-1}u \in \mathbb C(\lambda)$. When $u=v$, we denote $\Gamma_M(u,v)$ by $\Gamma_M(u).$ 
\end{defn}

\begin{lemma}\label{schur}
	Let $A, B, C$ and $D$ be matrices such that $M=
	\begin{bmatrix}
		A & B\\
		C & D
	\end{bmatrix}
	$. 
 If $D$ is invertible, then 
	$\det(M) = \det(D) \det(A - BD^{-1}C).$
\end{lemma}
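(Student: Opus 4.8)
The plan is to prove the identity $\det(M) = \det(D)\det(A - BD^{-1}C)$ by exhibiting an explicit block-triangular factorization of $M$ and taking determinants. Write $M$ as a product of three block matrices:
\[
\begin{bmatrix} A & B \\ C & D \end{bmatrix}
=
\begin{bmatrix} I & BD^{-1} \\ 0 & I \end{bmatrix}
\begin{bmatrix} A - BD^{-1}C & 0 \\ 0 & D \end{bmatrix}
\begin{bmatrix} I & 0 \\ D^{-1}C & I \end{bmatrix}.
\]
First I would verify this factorization by direct block multiplication: multiplying the last two factors gives $\begin{bmatrix} A - BD^{-1}C & 0 \\ C & D \end{bmatrix}$, and then left-multiplying by the first factor restores the $(1,1)$ block to $(A - BD^{-1}C) + BD^{-1}C = A$ and the $(1,2)$ block to $BD^{-1}D = B$, as required. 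This step uses only that $D$ is invertible, so that $D^{-1}$ makes sense; the block sizes are compatible by construction.

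Next I would take determinants of both sides, using multiplicativity of the determinant. The two unitriangular factors have determinant $1$: a block matrix of the form $\begin{bmatrix} I & * \\ 0 & I \end{bmatrix}$ or $\begin{bmatrix} I & 0 \\ * & I \end{bmatrix}$ is (after a suitable permutation of basis vectors, or simply by expansion) triangular with all diagonal entries equal to $1$. The middle factor is block-diagonal, so its determinant is $\det(A - BD^{-1}C)\det(D)$. Combining these gives $\det(M) = \det(A - BD^{-1}C)\det(D) = \det(D)\det(A - BD^{-1}C)$, which is the claim.

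I do not anticipate a genuine obstacle here: the only hypothesis needed is the invertibility of $D$, which is exactly what makes the Schur complement $A - BD^{-1}C$ well-defined, and the factorization is a routine identity. The one point to state carefully is that the determinant of a block-unitriangular matrix is $1$; if one wishes to avoid invoking that as known, it follows immediately by induction on block size, or by noting that such a matrix is a product of elementary matrices. Alternatively, and perhaps more cleanly for the write-up, one can simply observe $M \begin{bmatrix} I & 0 \\ -D^{-1}C & I \end{bmatrix} = \begin{bmatrix} A - BD^{-1}C & B \\ 0 & D \end{bmatrix}$ and take determinants of this single identity, using that the determinant of a block-triangular matrix is the product of the determinants of its diagonal blocks.
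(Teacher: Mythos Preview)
Your proof is correct and is the standard block-triangular factorization argument for the Schur complement formula. The paper does not actually prove Lemma~\ref{schur}: it is listed among the ``essential lemmas from \cite{sma21}'' and stated without proof as a known auxiliary result, so there is nothing in the paper to compare your argument against. Your write-up would serve as a complete justification should one be desired.
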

\begin{lemma}\label{mdl}
	Let $A$ be an $n \times n$ invertible matrix, and let $u$ and $v$ be any two $n \times 1$ vectors such that $1 + v^{t}A^{-1}u \ne 0$. Then 
	\begin{enumerate}
		\item $\det (A+uv^t) = (1+ v^t A^{-1} u) \det (A).$
		\item $(A+uv^t)^{-1} = A^{-1} - \dfrac{A^{-1} u v^t A^{-1}}{1+v^tA^{-1}u}.$
	\end{enumerate} 
\end{lemma}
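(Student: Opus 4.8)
The plan is to deduce both parts from Lemma~\ref{schur}. For part~(1) I would form the $(n+1)\times(n+1)$ block matrix
\[
M=\begin{bmatrix} A & u\\ -v^{t} & 1\end{bmatrix}
\]
and evaluate $\det(M)$ in two ways. Applying Lemma~\ref{schur} with the invertible $1\times 1$ corner $D=[1]$ gives
\[
\det(M)=\det([1])\,\det\!\big(A-u\,[1]^{-1}(-v^{t})\big)=\det(A+uv^{t}).
\]
On the other hand, by a permutation of the rows together with the same permutation of the columns — which leaves the determinant unchanged — one can move the block $A$ into the bottom-right corner, turning $M$ into $\begin{bmatrix} 1 & -v^{t}\\ u & A\end{bmatrix}$; applying Lemma~\ref{schur} to this matrix with the invertible corner $D=A$ gives
\[
\det(M)=\det(A)\,\det\!\big(1-(-v^{t})A^{-1}u\big)=\det(A)\,(1+v^{t}A^{-1}u).
\]
Equating the two expressions for $\det(M)$ is precisely part~(1).

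For part~(2) I would verify the stated formula directly. Put $\alpha=v^{t}A^{-1}u\in\mathbb{C}$, a scalar, and set $B=A^{-1}-\dfrac{A^{-1}uv^{t}A^{-1}}{1+\alpha}$, which is well defined since $1+\alpha\neq 0$. Expanding $(A+uv^{t})B$ and using associativity to replace $v^{t}A^{-1}u$ by $\alpha$ yields
\[
(A+uv^{t})B=\big(I+uv^{t}A^{-1}\big)-\frac{uv^{t}A^{-1}+\alpha\,uv^{t}A^{-1}}{1+\alpha}=I+uv^{t}A^{-1}-uv^{t}A^{-1}=I.
\]
By part~(1) we have $\det(A+uv^{t})=(1+\alpha)\det(A)\neq 0$, so $A+uv^{t}$ is invertible; hence the right inverse $B$ computed above is its two-sided inverse, which is exactly the asserted identity.

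I do not anticipate a real obstacle, since both parts reduce to short matrix computations. The one place that needs a little care is that Lemma~\ref{schur} as stated expands a block determinant only along the bottom-right corner, so to obtain the second evaluation of $\det(M)$ in part~(1) one must first rearrange the block matrix (noting that a simultaneous row-and-column permutation does not change the determinant), and then be scrupulous about the sign in the off-diagonal block $-v^{t}$ so that the two Schur complements emerge as $A+uv^{t}$ and $1+v^{t}A^{-1}u$ rather than with reversed signs.
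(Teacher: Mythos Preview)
Your argument is correct: the block-matrix trick with Lemma~\ref{schur} cleanly yields the determinant identity, and the direct multiplication verifies the Sherman--Morrison inverse. There is nothing to compare, however, because the paper does not prove Lemma~\ref{mdl} at all; it is simply quoted as a standard auxiliary result from \cite{sma21}, so your self-contained proof goes beyond what the paper supplies.
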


\begin{lemma}\label{evmain}
	Let $M$ be a matrix of order $n$ with an eigenvector $u$ corresponding to the eigenvalue $\mu$. Then $\Gamma_M(u) = \dfrac{\Vert u \Vert^2}{\lambda-\mu}$.
\end{lemma}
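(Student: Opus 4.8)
The plan is to use the eigenvector equation directly; neither of the determinant lemmas above is needed. Since $u$ is an eigenvector of $M$ for the eigenvalue $\mu$, we have $Mu=\mu u$, and hence $(\lambda I_n - M)u = \lambda u - \mu u = (\lambda-\mu)u$. Working over the field $\mathbb{C}(\lambda)$ of rational functions in $\lambda$, the matrix $\lambda I_n - M$ is invertible (its determinant is the characteristic polynomial of $M$, a nonzero element of $\mathbb{C}(\lambda)$) and the scalar $\lambda-\mu$ is a nonzero element of $\mathbb{C}(\lambda)$; multiplying the previous identity on the left by $(\lambda-\mu)^{-1}(\lambda I_n-M)^{-1}$ therefore yields
$$(\lambda I_n - M)^{-1}u \;=\; \frac{1}{\lambda-\mu}\,u.$$

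Substituting this into the definition of the main function gives
$$\Gamma_M(u;\lambda) \;=\; u^t(\lambda I_n - M)^{-1}u \;=\; \frac{1}{\lambda-\mu}\,u^t u \;=\; \frac{\Vert u\Vert^2}{\lambda-\mu},$$
which is exactly the claimed identity.

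There is essentially no obstacle here; the only point deserving a word of care is that the computation is carried out over $\mathbb{C}(\lambda)$, so that $\lambda I_n - M$ is genuinely invertible and $\lambda-\mu\neq 0$, after which the resulting identity of rational functions specializes to every complex $\lambda$ that is not an eigenvalue of $M$. If one prefers to avoid rational function fields altogether, one may instead clear denominators and verify the equivalent polynomial identity $(\lambda-\mu)\,u^t\,\adj(\lambda I_n - M)\,u = \det(\lambda I_n - M)\,u^t u$, which holds because $\adj(\lambda I_n - M)(\lambda I_n - M) = \det(\lambda I_n - M)I_n$ and $(\lambda I_n - M)u=(\lambda-\mu)u$. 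This lemma will act as the workhorse in Section~\ref{sspec}: whenever an eigenvector of the quotient matrix $\widetilde{\mathcal A}(\Gamma^B_A)$ can be written down by inspection, its contribution to the relevant main function is read off at once, and the remaining rank-one pieces are then handled via Lemma~\ref{mdl}.
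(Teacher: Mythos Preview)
Your proof is correct. The paper does not supply its own proof of this lemma; it is quoted from \cite{sma21} as an auxiliary result, so there is nothing to compare against. Your direct argument---using the eigenvector relation $(\lambda I_n - M)u = (\lambda-\mu)u$ to deduce $(\lambda I_n - M)^{-1}u = (\lambda-\mu)^{-1}u$ and then pairing with $u^t$---is the standard and natural one, and the care you take in working over $\mathbb{C}(\lambda)$ (or equivalently via the adjugate identity) is appropriate.
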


We write $\mathrm{ESCom}(G)$ and $\mathrm{CSCom}(G)$ to denote the equality supercommuting and conjugacy supercommuting graphs of a group $G$. In \cite{part2}, $\mathrm{ESCom}(G)$ and $\mathrm{CSCom}(G)$ were realized for dihedral groups and dicyclic groups, as described in the following theorem.
\begin{theorem}\label{ESCom-D2n}
Let   $D_{2n} = \langle a, b : a^n =b^2= e, bab^{-1} = a^{-1}\rangle$ be the dihedral group and $Q_{4n} = \langle a, b : a^{2n} = e, a^n =b^2, bab^{-1} = a^{-1}\rangle$ be the dicyclic group. Then
\[
\mathrm{ESCom}(D_{2n}) \cong \begin{cases}
		K_1 \vee \left(\underbrace{K_1\sqcup \cdots \sqcup K_1}_{\text{$n${\rm~times}}} \sqcup K_{n - 1}\right), &\text{ if  $n$ is odd}\\
		K_2 \vee \left(\underbrace{K_2\sqcup \cdots \sqcup K_2}_{\text{$\frac{n}{2}${\rm~times}}} \sqcup K_{n - 2}\right), &	\text{ if $n$ is even,}
	\end{cases}
\]
 \[
\mathrm{CSCom}(D_{2n}) \cong \begin{cases}
		K_1 \vee \left(K_1 \sqcup K_{\frac{n - 1}{2}}\right)\left[K_1, \underbrace{K_2, \dots, K_2}_{\text{$(\frac{n-1}{2})${\rm~times}}}, K_n\right], \text{ if  $n$ is odd}\\
		K_2 \vee \left(K_1 \sqcup K_1 \sqcup K_{\frac{n}{2} - 1}\right)\left[K_1, K_1, \underbrace{K_2, \dots, K_2}_{(\text{$\frac{n}{2}-1)${\rm~times}}}, K_{\frac{n}{2}}, K_{\frac{n}{2}}\right], 	\text{ if $n$ and $\frac{n}{2}$ are even}\\
		K_2 \vee \left(K_2 \sqcup K_{\frac{n}{2} - 1}\right)\left[K_1, K_1, \underbrace{K_2, \dots, K_2}_{\text{$(\frac{n}{2}-1)${\rm~times}}}, K_{\frac{n}{2}}, K_{\frac{n}{2}}\right], 	\text{ if $n$ is even and $\frac{n}{2}$ is odd,}
	\end{cases}
\]
\[
\mathrm{ESCom}(Q_{4n}) \cong  K_2 \vee \left(\underbrace{K_2\sqcup \cdots \sqcup K_2}_{\text{$n${\rm~times}}} \sqcup K_{2n - 2}\right)
\]
and
\[
\mathrm{C SCom}(Q_{4n}) \cong   \begin{cases}
		K_2 \vee \left(K_1 \sqcup K_1 \sqcup K_{n - 1}\right)\left[K_1, K_1, \underbrace{K_2, \dots, K_2}_{\text{$(n-1)${\rm~times}}}, K_{n}, K_{n}\right], &	\text{ if $n$ is even}	\\
		K_2 \vee \left(K_2 \sqcup K_{n - 1}\right)\left[K_1, K_1, \underbrace{K_2, \dots, K_2}_{\text{$(n-1)${\rm~times}}}, K_{n}, K_{n}\right], &	\text{ if $n$ is odd.}
	\end{cases}
\]
\end{theorem}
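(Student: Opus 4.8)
The plan is to reduce the statement to two pieces of group data — the commuting relation and the conjugacy classes — and then invoke \cite[Proposition 4.1]{part2}, according to which $\mathrm{ESCom}(G)$ and $\mathrm{CSCom}(G)$ are $\mathcal{H}$-joins $\Delta[K_{n_1},\dots,K_{n_k}]$ whose base graph $\Delta$ is the subgraph induced on a set of class representatives and whose block sizes $n_i$ are the sizes of the equality (or conjugacy) classes. So I would first record the commuting relation. Writing elements as $a^i$ and $a^ib$ and using $bab^{-1}=a^{-1}$, a direct computation gives, for $D_{2n}$: any two powers of $a$ commute; $a^i$ commutes with $a^jb$ iff $a^{2i}=e$ (equivalently, iff $a^i\in Z(D_{2n})$); and $a^ib$ commutes with $a^jb$ iff $n\mid 2(i-j)$. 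Thus $Z(D_{2n})=\{e\}$ for $n$ odd and $\{e,a^{n/2}\}$ for $n$ even. The same computation in $Q_{4n}$ (with $a^{2n}=e$, $b^2=a^n$) gives: $a^i$ commutes with $a^jb$ iff $a^{2i}=e$ (equivalently $a^i\in Z(Q_{4n})=\{e,a^n\}$); $a^ib$ commutes with $a^jb$ iff $n\mid(i-j)$; and $(a^ib)^2=a^n$, so $a^ib$ and $a^{i+n}b$ commute.

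For the ESCom statements the equality classes are singletons, hence $\mathrm{ESCom}(G)$ is just the commuting graph of $G$ on the full vertex set $G$, whose structure is read straight off the relation above: the $|Z(G)|$ central elements are universal vertices (the $K_{|Z(G)|}\vee$ factor); the remaining rotations form a clique, as they all lie in $\langle a\rangle$; the reflections split into cliques dictated by the congruence — $n$ singletons for $D_{2n}$ with $n$ odd, $n/2$ blocks of size $2$ for $D_{2n}$ with $n$ even, and $n$ blocks of size $2$ for $Q_{4n}$ — with no edges between a non-central rotation and a reflection. Matching block sizes with the displayed formulas, splitting on the parity of $n$, finishes this case.

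For the CSCom statements I would list the conjugacy classes — for $D_{2n}$: $\{e\}$ (together with $\{a^{n/2}\}$ when $n$ is even), the classes $\{a^i,a^{-i}\}$, and the reflections, forming a single class of size $n$ when $n$ is odd and two classes of size $n/2$ (even-exponent versus odd-exponent reflections) when $n$ is even; for $Q_{4n}$: $\{e\},\{a^n\}$, the classes $\{a^i,a^{-i}\}$, and the two classes $\{a^ib:i\ \text{even}\}$ and $\{a^ib:i\ \text{odd}\}$, each of size $n$. Next I would determine $\Delta$, in which two classes are adjacent iff they contain a commuting pair: the central classes are universal vertices; the classes $\{a^i,a^{-i}\}$ are pairwise adjacent and adjacent to no reflection class (no non-central rotation commutes with a reflection); and the one subtle adjacency is between the two reflection classes, which exists iff the congruence $n\mid 2(i-j)$ (respectively $n\mid(i-j)$) has a solution with $i-j$ odd, i.e.\ iff $n/2$ is odd for $D_{2n}$, respectively iff $n$ is odd for $Q_{4n}$. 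Writing $\Delta$ accordingly as $K_{|Z(G)|}\vee(\text{clique}\sqcup\cdots)$ and reinserting the class sizes $n_i$ through the $\mathcal{H}$-join produces each displayed expression.

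The computation is lengthy rather than deep, and the single load-bearing step is exactly the cross-class commuting question for reflections, since it is what distinguishes the three sub-cases of $\mathrm{CSCom}(D_{2n})$ and the two sub-cases of $\mathrm{CSCom}(Q_{4n})$. Everything else is organizational care: keeping track of the parity of $n$ (and of $n/2$), correctly identifying $Z(G)$ in each family, and ordering the vertices of $\Delta$ so that the list $[K_{n_1},\dots,K_{n_k}]$ comes out in the order written in the theorem.
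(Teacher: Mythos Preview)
The paper does not prove this theorem; it is quoted verbatim from \cite{part2} as a background result, with no argument given here. Your sketch is therefore not being compared against anything in the present paper, but it is correct and is presumably the argument carried out in \cite{part2}: compute the commuting relation and the conjugacy classes of $D_{2n}$ and $Q_{4n}$ by hand, then apply \cite[Proposition~4.1]{part2} to read off the $\Delta$-join structure. The one substantive step you single out---whether the two reflection conjugacy classes contain a commuting pair, governed by the solvability of $n\mid 2(i-j)$ (resp.\ $n\mid(i-j)$) with $i-j$ odd---is indeed what separates the sub-cases, and your parity analysis is right.
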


\section{Adjacency spectrum of supergraphs}\label{sspec}
In this section, we compute the spectrum of  $\mathrm{ESCom}(G)$ and $\mathrm{CSCom}(G)$, where $G$ is a dihedral or dicyclic group. In our proofs we shall use Theorem \ref{ESCom-D2n}, Theorem \ref{cardaso} along with Schur complement and the main function technique (\cite[Theorem 2]{sma21}). We begin with the computation of $\spec(\mathrm{ESCom}(G))$, where $G$ is the  dihedral group of order $2n$.

\begin{theorem}\label{S-ESCom-D2n}
	Let   $D_{2n} = \langle a, b : a^n =b^2= e, bab^{-1} = a^{-1}\rangle$ be the dihedral group of order $2n$. 
\begin{enumerate}
\item If $n$ is odd then $\spec(\mathrm{ESCom}(D_{2n})) = \{\underbrace{0, \dots,  0}_{\text{$(n-1)${\rm~times}}}, \underbrace{-1, \dots,  -1}_{\text{$(n-2)${\rm~times}}}, \alpha, \beta, \gamma\}$, 
where $\alpha, \beta, \gamma$ are  roots of the equation $x^3 - (n - 2)x^2 -(2n - 1)x + n(n-2) = 0$.
\item If $n$ is even then $\spec(\mathrm{ESCom}(D_{2n})) = \{\underbrace{1, \dots,  1}_{\text{$(\frac{n}{2}-1)${\rm~times}}}, \underbrace{-1, \dots,  -1}_{\text{$(\frac{3n}{2}-2)${\rm~times}}}, \alpha, \beta, \gamma\}$, 
where $\alpha, \beta, \gamma$ are  roots of the equation $x^3 - (n - 1)x^2 -(2n + 1)x + 2n^2 -5n-1 = 0$.
\end{enumerate}	
\end{theorem}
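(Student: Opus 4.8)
The plan is to run each parity case of $\mathrm{ESCom}(D_{2n})$ through Theorem~\ref{cardaso} after rewriting the structural description in Theorem~\ref{ESCom-D2n} as an $\mathcal{H}$-join with only \emph{three} regular factors; this reduces everything to the characteristic polynomial of a $3\times 3$ matrix. For $n$ odd, Theorem~\ref{ESCom-D2n} gives $\mathrm{ESCom}(D_{2n}) \cong K_1 \vee (\overline{K_n} \sqcup K_{n-1})$ (the isolated vertices being the $n$ reflections), which is the $\mathcal{H}$-join $\mathcal{H}[\overline{K_n},K_1,K_{n-1}]$ with $\mathcal{H}$ the path on $\{1,2,3\}$ whose edges are $\{1,2\}$ and $\{2,3\}$ --- the two end factors are non-adjacent in $\mathcal{H}$ because a non-identity rotation and a reflection do not commute. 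For $n$ even, Theorem~\ref{ESCom-D2n} gives $\mathrm{ESCom}(D_{2n}) \cong K_2 \vee ((n/2)K_2 \sqcup K_{n-2})$, and the $n/2$ commuting pairs of reflections bundle into the $1$-regular graph $(n/2)K_2$, so $\mathrm{ESCom}(D_{2n}) \cong \mathcal{H}[(n/2)K_2,K_2,K_{n-2}]$ for the same path $\mathcal{H}$. The key observation here is that the many small pieces appearing in Theorem~\ref{ESCom-D2n} must be lumped together into one \emph{regular} factor so that Theorem~\ref{cardaso} applies and yields a $3\times 3$ quotient matrix rather than an $(n+2)\times(n+2)$ one; with this done, the main-function lemmas (Lemmas~\ref{schur}--\ref{evmain}) are not actually needed for this theorem, although one could equivalently keep all factors complete and reduce the larger quotient matrix with them.

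Next I would apply Theorem~\ref{cardaso} to the two $\mathcal{H}$-joins above. This fixes all but three eigenvalues immediately: for $n$ odd one retains $\spec(\overline{K_n})\setminus\{0\}=\{0^{(n-1)}\}$ and $\spec(K_{n-1})\setminus\{n-2\}=\{(-1)^{(n-2)}\}$; for $n$ even one retains $\spec((n/2)K_2)\setminus\{1\}=\{1^{(n/2-1)},(-1)^{(n/2)}\}$, $\spec(K_2)\setminus\{1\}=\{-1\}$ and $\spec(K_{n-2})\setminus\{n-3\}=\{(-1)^{(n-3)}\}$. These account for $2n-3$ of the $2n$ eigenvalues, so the remaining three, $\alpha,\beta,\gamma$, are precisely the eigenvalues of the quotient matrix $\widetilde A$ of Theorem~\ref{cardaso}, which with the orderings above is
\[
\begin{bmatrix} 0 & \sqrt{n} & 0 \\ \sqrt{n} & 0 & \sqrt{n-1} \\ 0 & \sqrt{n-1} & n-2 \end{bmatrix}\ (n\text{ odd}),\qquad
\begin{bmatrix} 1 & \sqrt{2n} & 0 \\ \sqrt{2n} & 1 & \sqrt{2(n-2)} \\ 0 & \sqrt{2(n-2)} & n-3 \end{bmatrix}\ (n\text{ even}),
\]
the off-diagonal entries $\sqrt{n_i n_j}$ occurring only along the two edges of the path $\mathcal{H}$.

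The last step is to expand $\det(xI_3-\widetilde A)$; since $\widetilde A$ is tridiagonal, expanding along the first row is a one-line computation and gives $x^3-(n-2)x^2-(2n-1)x+n(n-2)$ in the odd case and $x^3-(n-1)x^2-(2n+1)x+2n^2-5n-1$ in the even case, which are the cubics in the statement. I do not expect a genuine obstacle: beyond the packaging point noted above, the only thing to watch is the bookkeeping of the multiplicities of $0$, $1$ and $-1$ when one Perron eigenvalue is deleted from each factor. Finally, for the accompanying remark that $\mathrm{ESCom}(D_{2n})$ is not integral, it is enough to observe that each cubic changes sign strictly between two consecutive integers --- for $n$ odd it takes the value $-n$ at $x=n-1$ and $n(n-1)$ at $x=n$, and the even case is handled by the same kind of evaluation --- so one of $\alpha,\beta,\gamma$ is irrational.
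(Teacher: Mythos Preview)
Your proof is correct and takes a genuinely different, cleaner route than the paper's. The paper, for odd $n$, writes out the full $2n\times 2n$ adjacency matrix and attacks it with a recursive Schur complement/main function argument: it defines $A_{n*k}$ as the matrix obtained from $\mathcal{A}(K_n)$ by adjoining $k$ extra rows and columns, derives the recursion $P_{A_{n*n}}(\lambda)=\lambda P_{A_{n*(n-1)}}(\lambda)-\lambda^{n-1}P_{A_{(n-1)*0}}(\lambda)$, unwinds it to $\lambda^n P_{A_{n*0}}-n\lambda^{n-1}P_{A_{(n-1)*0}}$, and then substitutes the characteristic polynomials of complete graphs. For even $n$ it simply asserts that the analogous calculation yields the stated cubic. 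In short, the paper never reduces to a $3\times 3$ problem; it works at full size and relies on Lemmas~\ref{schur}--\ref{evmain} to control the recursion.

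Your move is to repackage the factors in Theorem~\ref{ESCom-D2n} before invoking Theorem~\ref{cardaso}: the $n$ singleton reflections become the $0$-regular graph $\overline{K_n}$ (odd $n$), the $n/2$ commuting pairs become the $1$-regular graph $(n/2)K_2$ (even $n$), and the quotient graph $\mathcal{H}$ is just the path $P_3$. This is a real simplification: Theorem~\ref{cardaso} applies directly because every factor is regular, the quotient matrix is $3\times 3$ and tridiagonal, and the cubics drop out from a one-line cofactor expansion with no recursion or main functions. The paper's approach, by contrast, keeps the many small complete factors (as in Theorem~\ref{t:specmain}) and pays for it with a large quotient, which is why the heavier machinery is brought in. What the paper's method buys is uniformity with the later $\mathrm{CSCom}$ computations, where the factor sizes are unequal and no such bundling into a single regular piece is available; your shortcut is specific to the $\mathrm{ESCom}$ case but entirely adequate for this theorem.
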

\begin{proof}
(a) If $n$ is odd then by Theorem \ref{ESCom-D2n}, we have
\[
\mathrm{ESCom}(D_{2n}) \cong K_1 \vee (\underbrace{K_1\sqcup \cdots \sqcup K_1}_{n\text{~times}} \sqcup K_{n - 1}).
\]
We identify the vertex set $\{e, a, a^2, \dots, a^{n - 1}, b, ab, \dots, a^{n-1}b\}$ of $\mathrm{ESCom}(D_{2n})$ with the set $\{1, 2, \dots, 2n\}$ preserving the order and observe that

\begin{equation}\label{odd}
	\mathcal{A}(\mathrm{ESCom}(D_{2n})) = \begin{bmatrix}
	0  & 1      & \cdots &1      &1      & 1     &\cdots & 1  \\
	1  & 0      & \cdots &1      &0      & 0     &\cdots & 0 \\
	\vdots & \vdots & \ddots &\vdots &\vdots &\vdots &\ddots & \vdots \\
	1  & 1      & \cdots & 0     & 0     & 0     &\cdots & 0 \\
	1  & 0      & \cdots & 0     & 0     & 0     &\cdots & 0 \\
	\vdots & \vdots & \ddots &\vdots &\vdots &\vdots &\ddots & \vdots \\
	1  & 0      & \cdots & 0     & 0     & 0     &\cdots & 0 \\
\end{bmatrix}.
\end{equation}
We use the Schur complement and the main function technique from \cite[Theorem 2]{sma21} to completely describe the spectrum of the matrix $\mathcal{A}(\mathrm{ESCom}(D_{2n}))$. 

Let $A$ be the adjacency matrix of the complete graph on $n$ vertices. Let $A \ast e_1$ be the $(n+1) \times (n+1)$ matrix obtained from the matrix $A$ as follows.
$$
A \ast e_1 = \begin{bmatrix}
		0  & 1      & \cdots &1   &|   &1       \\
		1  & 0      & \cdots &1    &|  &0      \\
		\vdots & \vdots & \ddots &\vdots &\vdots  \\
		1  & 1      & \cdots & 0   &|  & 0      \\
		-&-&-&-&-&- \\
		1  & 0      & \cdots & 0   &|  & 0      \\
	\end{bmatrix}.
$$
We observe that the matrix $\mathcal{A}(\mathrm{ESCom}(D_{2n}))$ is equal to the matrix 
$$
A_{n*n} := \underbrace{((((A \ast e_1)* e_1)\cdots)*e_1)}_{\text{$n$ times }}.
$$
For example, the matrix $A_{2*4}$ is given below
$$
A_{2*4} = \begin{bmatrix}
	0  & 1  & |    & 1  &1   & 1   &1       \\
	1  & 0   & |  & 0  & 0    & 0  &0      \\
	-  & -   & -  & \cdots & -    & -  &-      \\
	1 & 0 & |    & 0 &0 &0 & 0  \\
	1 & 0 &|    & 0 &0 &0 & 0  \\
	1  & 0 & |    &0  & 0 & 0  & 0    \\
	1  & 0  & |    &0   & 0   &0  & 0      \\
\end{bmatrix}.
$$
Note that $A_{n*0}$ is the adjacency matrix of the graph $K_n$. We observe that
$$
P_{A_{n*n}}(\lambda) :=\det(\lambda I - A_{n*n}) = \det \begin{bmatrix}	\lambda I - A_{n*(n-1)} & - e_{1} \\ -e_{1}^{t} & \lambda
\end{bmatrix}.
$$
Now, by using Lemma \ref{schur} and Lemma \ref{mdl}, we have 
\begin{align*}
	 \det(\lambda I - A_{n*n}) &= \det \begin{bmatrix}	\lambda I - A_{n*(n-1)} & - e_{1} \\ -e_{1}^{t} & \lambda
 \end{bmatrix} \\ &= \lambda \det (\lambda I - A_{n*(n-1)} - \frac{1}{\lambda} e_1 e_1^t) \\ &  = \lambda (1 - \frac{1}{\lambda} T) P_{A_{n*(n-1)}}(\lambda),
\end{align*}
where $T = e_1 ^t (\lambda I - A_{n * (n-1)})^{-1}e_1$. Simplifying this expression for $T$, we get 
$$
T = \frac{\big(\adj(\lambda I - A_{n*(n-1)})\big)_{1,1}}{P_{A_{n*(n-1)}(\lambda) }} = \frac{\lambda^{n-1} P_{A_{(n-1)*0}}(\lambda)}{P_{A_{n*(n-1)}(\lambda)}}.
$$
Substituting the value of $T$ in the previous equation, we get the following recurrence relations.
$$
P_{A_{n*n}}(\lambda)  = \lambda \big( 1 - \frac{\lambda^{n-1} P_{A_{(n-1)*0}}(\lambda)}{\lambda P_{A_{n*(n-1)}}(\lambda)}\big)P_{A_{n * (n-1)}}(\lambda)   = \lambda P_{A_{n * (n-1)}}(\lambda) - \lambda ^{n-1} P_{A_{(n-1)* 0}(\lambda)} 
$$
i.e., 
\begin{equation}
P_{A_{n*n}}(\lambda)  =  \lambda P_{A_{n * (n-1)}}(\lambda) - \lambda ^{n-1} P_{A_{(n-1)* 0}} (\lambda).
\end{equation}
Expanding the middle term recursively, we get 
\begin{equation}\label{relation}
	P_{A_{n*n}}(\lambda)  =  \lambda^n P_{A_{n * 0}}(\lambda) - n \lambda ^{n-1} P_{A_{(n-1)* 0}}(\lambda).
\end{equation}
Equation \eqref{relation} gives  relation between the characteristic polynomial of the adjacency matrix of $\mathrm{ESCom}(D_{2n})$ and the characteristic polynomial of the adjacency matrix of the complete graphs of smaller size. Now, by substituting the values for $P_{A_{n*0}}(\lambda)$ and $P_{A_{(n-1)*0}}$
and simplifying, we get 
\begin{align*}
P_{A_{n*n}}(\lambda)  &=  \lambda^n ((\lambda-(n-1))(\lambda+1)^{n-1}) - n \lambda ^{n-1} ((\lambda-(n-2))(\lambda+1)^{n-2}) \\	
& = \lambda^{n-1} (\lambda + 1)^{n-2} (\lambda(\lambda-(n-1))(\lambda+1) - n (\lambda - (n-2))) \\
& = \lambda^{n-1} (\lambda+1)^{n-2} (\lambda^3 -  (n-2) \lambda^2 - (2n-1) \lambda + n (n-2)).
\end{align*}

\noindent (b) If $n$ is even then by Theorem \ref{ESCom-D2n}, we have
\[
\mathrm{ESCom}(D_{2n})  \cong K_2 \vee (\underbrace{K_2\sqcup \cdots \sqcup K_2}_{\frac{n}{2}\text{~times}} \sqcup K_{n - 2}).
\]
We identify the vertex set $\{e, a^{\frac{n}{2}}, a, a^2, \dots, a^{\frac{n}{2} -1}, a^{\frac{n}{2}+1}, \dots, a^{n-1}, b, a^{\frac{n}{2}}b, ab, a^{\frac{n}{2} + 1}b$, $\dots a^{\frac{n}{2}-1}b, a^{n-1}b\}$ of $\mathrm{ESCom}(D_{2n})$ with the set $\{1, 2, \dots, 2n\}$ preserving the order and observe that 
\begin{equation}\label{even}
	\mathcal{A}(\mathrm{ESCom}(D_{2n})) = \begin{bmatrix}
	0  & 1 &1      & \cdots &1      &1 & 1 & \cdots     & 1    &1  \\
	1  & 0  & 1    & \cdots &1      &1 & 1 &\cdots    & 1    &1    \\
	1  & 1  & 0    & \cdots &1      &0 & 0 &\cdots     & 0    &0    \\
	\vdots & \vdots & \vdots & \ddots &\vdots &\vdots & \vdots & \ddots &\vdots &\vdots   \\
	1  & 1   & 1   & \cdots & 0     & 0  &0 & \cdots   & 0     &0  \\
	1  & 1  & 0   & \cdots & 0     & 0  &1 & \cdots  & 0    &0 \\
	1  & 1   & 0   & \cdots & 0     & 1 & 0 & \cdots    & 0    &0 \\
	\vdots & \vdots & \vdots & \ddots &\vdots &\vdots & \vdots &\ddots &\vdots & \vdots   \\
	1  & 1      & 0 & \cdots     & 0     & 0   &0 & \cdots &0 & 1 \\
	1  & 1   & 0   & \cdots & 0     & 0  &0 & \cdots  & 1    &0 \\
\end{bmatrix}.
\end{equation}

Again, we use the Schur complement and the main function technique as in the previous case. Since the calculation is the same, we skip it. This will give us the following characteristic polynomial of $\mathcal{A}(\mathrm{ESCom}(D_{2n}))$ 
\[
(x-1)^{\frac{n}{2} - 1}(x+1)^{\frac{3n}{2} - 2}(x^3 - (n - 1)x^2 -(2n + 1)x + 2n^2 -5n-1).
\]
This completes the proof. 
\end{proof}
\begin{rem}\rm
In the proof of the above theorem, the characteristic polynomial of the matrix given in Equation \ref{odd} can be calculated directly by expanding the determinant of $(\lambda I - A)$ along the last column. But the same method gets complicated in the case of the matrix given in Equation \ref{even}. So we use Schur complement and the main function technique, which works for odd and even cases uniformly, and the resulting calcualtions are much simpler.
\end{rem}
\begin{theorem}\label{S-ESCom-Q4n}
Let  $Q_{4n} = \langle a, b : a^{2n} = e, a^n =b^2, bab^{-1} = a^{-1}\rangle$ the dicyclic group. Then
\[
\spec(\mathrm{ESCom}(Q_{4n})) = \{\underbrace{1, \dots,  1}_{\text{$(n-1)${\rm~times}}}, \underbrace{-1, \dots,  -1}_{\text{$(3n-2)${\rm~times}}}, \alpha, \beta, \gamma\}, 
\]
where $\alpha, \beta, \gamma$ are  roots of the equation $x^3 - (2n - 1)x^2 -(4n + 1)x + 8n^2 -10n-1= 0$.
\end{theorem}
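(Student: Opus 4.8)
The plan is to handle this exactly as the even-$n$ case of Theorem~\ref{S-ESCom-D2n}. By Theorem~\ref{ESCom-D2n},
\[
\mathrm{ESCom}(Q_{4n}) \cong K_2 \vee \big(\underbrace{K_2\sqcup \cdots \sqcup K_2}_{n\text{~times}} \sqcup K_{2n - 2}\big),
\]
which has the same shape as $\mathrm{ESCom}(D_{2n})$ for even $n$, now with $n$ copies of $K_2$ in place of $\tfrac n2$ and with the large block $K_{2n-2}$ in place of $K_{n-2}$. In the terminology of Section~\ref{shjoin} this graph is $\Delta[K_{n_1},\dots,K_{n_k}]$ with $k=n+2$, part sizes $(n_1,\dots,n_k)=(\underbrace{2,\dots,2}_{n+1},\,2n-2)$, and $\Delta$ the star $K_{1,n+1}$ whose centre is the part corresponding to the central copy of $K_2$; so Theorem~\ref{t:specmain} applies directly.

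First I would read off the ``factor'' part of the spectrum: each of the $n+1$ copies of $K_2$ (the central one included) contributes a single eigenvalue $-1$, and the copy of $K_{2n-2}$ contributes $2n-3$ eigenvalues $-1$, giving $3n-2$ copies of $-1$ in all. It then remains to compute $\spec(\widetilde{\mathcal A})$, where $\widetilde{\mathcal A}=\widetilde{\mathcal A}(\mathrm{ESCom}(Q_{4n}))$ is the $(n+2)\times(n+2)$ arrowhead matrix with diagonal $(\underbrace{1,\dots,1}_{n+1},\,2n-3)$, whose first row and column carry the off-diagonal entries $\sqrt{2\cdot 2}=2$ in positions $2,\dots,n+1$ and $\sqrt{2(2n-2)}=2\sqrt{n-1}$ in the last position, all other off-diagonal entries being $0$.

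Next I would compute $\det(\lambda I-\widetilde{\mathcal A})$ by the Schur complement of Lemma~\ref{schur}, taking $D$ to be the diagonal $(n+1)\times(n+1)$ lower-right block; this collapses the bordered determinant into the scalar
\[
(\lambda-1)-\frac{4n}{\lambda-1}-\frac{4(n-1)}{\lambda-(2n-3)},
\]
where the two fractions are precisely the main-function contributions (Lemma~\ref{evmain}) of the $n$ diagonal entries $\lambda-1$ and of the single entry $\lambda-(2n-3)$. Clearing denominators and extracting the common factor $(\lambda-1)^{n-1}$ then gives
\[
\det(\lambda I-\widetilde{\mathcal A})=(\lambda-1)^{\,n-1}\big(\lambda^{3}-(2n-1)\lambda^{2}-(4n+1)\lambda+8n^{2}-10n-1\big),
\]
so $\spec(\widetilde{\mathcal A})=\{\underbrace{1,\dots,1}_{n-1},\alpha,\beta,\gamma\}$ with $\alpha,\beta,\gamma$ the roots of the cubic in the statement. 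Assembling this with the $3n-2$ copies of $-1$ yields the claimed spectrum, and the count $(3n-2)+(n-1)+3=4n=\lvert Q_{4n}\rvert$ confirms that nothing is missed.

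I expect no genuine obstacle: the argument is essentially a transcription of the even-$n$ dihedral computation (which the paper already abbreviates), and the only real work is the algebraic simplification of the bracket above into the displayed cubic. The one point to watch is purely bookkeeping --- setting up the arrowhead matrix $\widetilde{\mathcal A}$ correctly, in particular the off-diagonal weights $\sqrt{n_in_j}$ and the diagonal entry $2n-3$ coming from $K_{2n-2}$. (As in the earlier cases, non-integrality of the graph then follows because the cubic factor has an irrational root.)
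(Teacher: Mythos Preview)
Your argument is correct, but it redoes work the paper has already packaged. The paper's proof is a one-liner: from Theorem~\ref{ESCom-D2n} the graph $\mathrm{ESCom}(Q_{4n})$ is \emph{isomorphic} to $\mathrm{ESCom}(D_{2\cdot 2n})$ (both equal $K_2\vee(\underbrace{K_2\sqcup\cdots\sqcup K_2}_{n}\sqcup K_{2n-2})$), so one simply substitutes $n\mapsto 2n$ into Theorem~\ref{S-ESCom-D2n}(b) and reads off the multiplicities $\tfrac{n}{2}-1\mapsto n-1$, $\tfrac{3n}{2}-2\mapsto 3n-2$ and the cubic $x^{3}-(n-1)x^{2}-(2n+1)x+2n^{2}-5n-1\mapsto x^{3}-(2n-1)x^{2}-(4n+1)x+8n^{2}-10n-1$. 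You implicitly observe this same isomorphism (``the same shape as $\mathrm{ESCom}(D_{2n})$ for even $n$'') but then repeat the Schur-complement computation from scratch rather than invoking the earlier theorem. Your direct arrowhead calculation is clean and self-contained, and it has the minor advantage of making the quotient-matrix structure transparent without reference to the dihedral case; the paper's route is shorter precisely because that structure was already analysed there.
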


\begin{proof}
By Theorem \ref{ESCom-D2n} we have 
$\mathrm{ESCom}(Q_{4n}) \cong \mathrm{ESCom}(D_{2\times 2n})$. Hence, the result follows from Theorem \ref{S-ESCom-D2n}(b).
\end{proof}

\begin{theorem}\label{S-CSCom-D2n}
	Let   $D_{2n} = \langle a, b : a^n =b^2= e, bab^{-1} = a^{-1}\rangle$ be the dihedral group of order $2n$. 
\begin{enumerate}
\item If   $n$ is odd then $\spec(\mathrm{CSCom}(D_{2n})) = \{\underbrace{-1,\dots,-1}_{\text{$(2n-3)${\rm~times}}}, \alpha, \beta, \gamma \,\}$, where $\alpha, \beta, \gamma$ are the roots of the equation $x^3+(3-2n)x^2+(n^2-5n+3)x+ 2n^2-4n+1$.

\item If   $n$ and $\frac{n}{2}$  are even then $\spec(\mathrm{CSCom}(D_{2n})) = \{ \, \underbrace{-1,\dots,-1}_{\text{$(2n - 4)${\rm~times}}}, \, \frac{n}{2} -1, \alpha, \beta, \gamma \,\}$, where $\alpha, \beta, \gamma$ are the roots of the equation $x^3+(3-\frac{3n}{2})x^2+(\frac{n^2}{2}-5n+3)x+\frac{5n^2}{2}-\frac{15n}{2}+1 = 0$.

\item If   $n$ is even and $\frac{n}{2}$ is odd then $\spec(\mathrm{CSCom}(D_{2n})) = \{ \, \underbrace{-1,\dots,-1}_{\text{$(2n - 4)${\rm~times}}} \, , \alpha, \beta, \gamma, \delta \,\}$, where $\alpha, \beta, \gamma, \delta$ are the roots of the equation $x^4 + (4-2n)x^3 + (n^2-8n+6)x^2 + (4n^2-14n+4)x +3n^2-8n+1 = 0$.
	
\end{enumerate}
\end{theorem}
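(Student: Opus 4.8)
The plan is to follow the strategy used in the proof of Theorem~\ref{S-ESCom-D2n}. By Theorem~\ref{ESCom-D2n}, in each of the three cases $\mathrm{CSCom}(D_{2n})$ is a joined union $\Delta[K_{n_1},\dots,K_{n_k}]$ of complete graphs, where $\Delta$ is the induced subgraph of $\mathrm{CSCom}(D_{2n})$ on a transversal of the conjugacy classes: the block sizes $n_i$ are $1$ (once if $n$ is odd, twice if $n$ is even, coming from the central classes), $2$ with multiplicity $\frac{n-1}{2}$ (resp.\ $\frac n2-1$) from the non-central rotation classes, and either $n$ once or $\frac n2$ twice from the reflection class(es); in $\Delta$ the central vertices are adjacent to everything, the rotation vertices form a clique, each reflection vertex is adjacent to the central vertices only, and the two reflection vertices are mutually adjacent exactly when $\frac n2$ is odd, since then $a^jb$ and $a^{j+n/2}b$ commute but lie in different conjugacy classes. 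Theorem~\ref{t:specmain} then reduces the problem to computing $\spec(\widetilde{\mathcal A})$: every $K_{n_i}$ contributes $n_i-1$ eigenvalues equal to $-1$, while the remaining eigenvalues are those of the quotient matrix $\widetilde{\mathcal A}$ with diagonal $(n_i-1)_i$ and $(i,j)$-entry $\sqrt{n_in_j}$ or $0$ according to adjacency in $\Delta$.

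Next I would evaluate $\det(\lambda I-\widetilde{\mathcal A})$ by repeated Schur complementation (Lemma~\ref{schur}) together with the matrix determinant lemma and the main-function evaluation (Lemmas~\ref{mdl} and~\ref{evmain}). The decisive structural fact is that the rotation classes span a principal submatrix of $\widetilde{\mathcal A}$ of the form $2J-I$, a multiple of the identity plus a rank-one term; its Perron eigenvector is $\mathbf 1$, so by Lemma~\ref{evmain} the main function $\mathbf 1^{t}(\lambda I-(2J-I))^{-1}\mathbf 1$ equals $\|\mathbf 1\|^{2}/(\lambda-\mu)$, where $\mu=n-2$ ($n$ odd) or $\mu=n-3$ ($n$ even), and its characteristic polynomial is $(\lambda-\mu)(\lambda+1)^{m-1}$ for $m$ rotation classes. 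Eliminating this block by Schur complement replaces all the rotation rows and columns by one rational correction term with denominator $\lambda-\mu$, leaving a small core: a $2\times2$ matrix in case (a) ($[e]$ versus $[b]$), and a $4\times4$ matrix (the two central vertices together with the two reflection vertices) in cases (b) and (c). Clearing the single denominator and pulling out the accumulated power of $(\lambda+1)$ then gives precisely the advertised cubic in cases (a) and (b), and the quartic in case (c) --- equivalently, in case (c) one gets $(\lambda+1)$ times a cubic, which is exactly why the multiplicity of $-1$ there is $2n-4$ rather than $2n-3$.

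In cases (b) and (c) the two reflection blocks have equal size and play symmetric roles in $\Delta$, so $\widetilde{\mathcal A}$ commutes with the involution swapping them. Splitting into the $\pm1$-eigenspaces of this involution, the antisymmetric direction $e_{\mathrm{refl}_1}-e_{\mathrm{refl}_2}$ is an eigenvector of $\widetilde{\mathcal A}$ with eigenvalue $\frac n2-1$ when $\frac n2$ is even (this is the extra eigenvalue in (b)) and with eigenvalue $-1$ when $\frac n2$ is odd (this is the ``missing'' copy of $-1$ in (c)), while the symmetric part reduces the $4\times4$ core above to the genuinely $2\times2$ computation; one similarly peels off the central antisymmetric direction $e_{c_1}-e_{c_2}$ as an eigenvector for $-1$. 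Assembling the $-1$'s coming from the $K_{n_i}$'s with those coming from the $(\lambda+1)$-factor of $\det(\lambda I-\widetilde{\mathcal A})$ produces the stated multiplicities, the residual eigenvalues being the roots of the listed polynomials; since these monic integer polynomials have no integer root (by the rational root theorem, their odd constant terms rule out the small candidate divisors), it follows that $\mathrm{CSCom}(D_{2n})$ is not integral.

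The step I expect to be the main obstacle is the bookkeeping in case (c): because the two reflection classes are mutually adjacent, one cannot dispose of them before treating the rotations, so after the Schur step on the rotation block one must work with the full $4\times4$ core carrying the rational function $(n-2)/(\lambda-n+3)$; propagating this through the symmetric $2\times2$ reduction and clearing denominators correctly so as to land on exactly $x^{4}+(4-2n)x^{3}+(n^{2}-8n+6)x^{2}+(4n^{2}-14n+4)x+3n^{2}-8n+1$ is the delicate part, whereas the corresponding computations in cases (a) and (b) are comparatively mechanical.
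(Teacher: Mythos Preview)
Your proposal is correct and uses the same overall framework as the paper --- Theorem~\ref{ESCom-D2n} for the joined-union structure, Theorem~\ref{t:specmain} to peel off the $(-1)$'s coming from the $K_{n_i}$'s, and then Schur complement together with Lemma~\ref{evmain} on the $2J-I$ rotation block --- but the execution differs in two noteworthy ways.

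First, the order of the Schur steps is reversed in parts~(b) and~(c): the paper eliminates the $2\times2$ reflection block first (introducing $a=1/(\lambda-\tfrac n2+1)$, respectively the $2\times2$ inverse with off-diagonal $\tfrac n2$) and only afterwards removes the rotation block, whereas you eliminate the rotation block first and are then left with a $4\times4$ ``central--reflection'' core. Second, and more interestingly, you exploit the involution of $\widetilde{\mathcal A}$ swapping the two reflection coordinates (and, separately, the two central coordinates). This symmetry argument is absent from the paper, which simply grinds through the determinant; your observation that $e_{\mathrm{refl}_1}-e_{\mathrm{refl}_2}$ is an eigenvector of $\widetilde{\mathcal A}$ with eigenvalue $\tfrac n2-1$ in~(b) and $-1$ in~(c) gives a conceptual explanation for the isolated eigenvalue $\tfrac n2-1$ in~(b) and for the factor $(\lambda+1)$ hidden in the quartic of~(c). (Indeed, substituting $\lambda=-1$ into the quartic of~(c) gives $0$, which is consistent with your remark and with the paper's numerical example where one of the four roots equals $-1$.) The trade-off is that the paper's route keeps all three cases uniformly as ``two Schur steps and expand'', while yours requires the extra symmetry layer but yields a structurally clearer picture and avoids carrying the rational entries through a $4\times4$ determinant directly.

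One small wording issue: your sentence ``this is exactly why the multiplicity of $-1$ there is $2n-4$ rather than $2n-3$'' should say that the multiplicity \emph{listed before the quartic} is $2n-4$; the actual multiplicity of $-1$ in $\spec(\mathrm{CSCom}(D_{2n}))$ in case~(c) is $2n-3$, because the quartic itself has $-1$ as a root.
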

\begin{proof}
(a) If $n$ is odd then, by Theorem 	\ref{ESCom-D2n}, we have 
$$
\mathrm{CSCom}(D_{2n}) \cong \Delta[K_1, \underbrace{K_2, \dots, K_2}_{(\frac{n-1}{2})\text{~times}}, K_n],
$$ 
where $\Delta = K_1 \vee (K_1 \sqcup K_{\frac{n - 1}{2}})$ = $K_1 \vee ( K_{\frac{n - 1}{2}}\sqcup K_1 )$. Therefore,
$$\mathcal{A}(\Delta) = \begin{bmatrix}
	0 & 1  & \cdots &1 & 1  \\
	1  & 0 & \cdots &1 & 0 \\
	\vdots & \vdots & \ddots & \vdots & \vdots  \\
	1 & 1  & \cdots  & 0 & 0  \\
	1 & 0 & \cdots & 0 &0
\end{bmatrix}
\text{ and so }
\rho_{i,j} = \begin{cases}
	0, & \text{ if } i = j \text{ or } \\
	& 2 \leq i \leq \frac{n +1 }{2} \text{ and } j =  \frac{n + 3}{2} \text{ or }\\
	& i = \frac{n + 3}{2}   \text{ and } 2 \leq j \leq \frac{n +1 }{2}\\
	1,  & \text{ otherwise. }
\end{cases}$$
Hence, by Theorem  \ref{t:specmain}, it follows that 
$$\spec(\mathrm{CSCom}(D_{2n})) =  \{\underbrace{-1,\dots,-1}_{(\frac{3n -3}{2})\text{~times}}\} \cup \spec(\widetilde{\mathcal A}(\mathrm{CSCom}(D_{2n}))),$$ 
where $\widetilde{\mathcal A}(\mathrm{CSCom}(D_{2n}))$  is a matrix of size $\frac{n + 3}{2}$ given by 
\begin{align*}
\widetilde{\mathcal A}(\mathrm{CSCom}(D_{2n})) &= \begin{bmatrix}
	0 & \sqrt{2} & \sqrt{2} & \cdots &\sqrt{2} & \sqrt{n}  \\
	\sqrt{2}  & 1 & 2 & \cdots &2 & 0 \\
 \sqrt{2} & 2 & 1 & \cdots & 2 & 0\\
	\vdots & \vdots & \vdots & \ddots & \vdots & \vdots \\
	\sqrt{2} & 2 &2  & \cdots  & 1 & 0  \\
	\sqrt{n} & 0 & 0 & \cdots & 0 &n - 1
\end{bmatrix}\\
&= \begin{bmatrix}
	0 & | & \sqrt{2} & \sqrt 2  & \cdots &\sqrt{2} &\sqrt{2} & | & \sqrt{n}  \\
		- & -  & - &- & - & - & - & - & - \\
	\sqrt{2} & |   & 1 &  2 & \cdots &2 &2 & | & 0 \\
		\sqrt{2} & |   &2 & 1 & \cdots &2 &2 & | & 0 \\
	\vdots & |  & \vdots & \vdots & \ddots & \vdots & \vdots & |  & 0\\
		\sqrt{2} & |   &2 & \vdots & \cdots &1 &2 & | & \vdots \\
	\sqrt{2} & |  &2  & \cdots & \cdots & 2&1 & | & 0  \\
		- & -  & - & - &- & - & - & - &-\\
	\sqrt{n} & |  & 0 & \cdots & 0 & 0 & 0 & | &n - 1
\end{bmatrix}.
\end{align*}
Note that the middle block of $\widetilde{\mathcal A}(\mathrm{CSCom}(D_{2n}))$ is $2 J_{\frac{n-1}{2}} - I_{\frac{n-1}{2}}$ whose eigenvalues are  $-1$ with multiplicity $\frac{n-3}{2}=\frac{n-1}{2}-1$ and $n-2$ with multiplicity $1$. These eigenvalues are useful in obtaining the eigenvalues of $\widetilde{\mathcal A}(\mathrm{CSCom}(D_{2n}))$. We obtain the characteristic polynomial of $\widetilde{\mathcal A}(\mathrm{CSCom}(D_{2n}))$ by expanding along the last column as given below:

\begin{align*}
&P_{\widetilde{\mathcal A}(\mathrm{CSCom}(D_{2n}))}(\lambda)\\
&~~= (-1)^{2+\frac{n+3}{2}} \sqrt{n} \det \begin{bmatrix}
	 - \sqrt{2} & |   & \lambda-1 &-2  &  \cdots &  \cdots &-2 \\
	-\sqrt 2 & |  & - 2 &\lambda-1 & \cdots & \cdots & \vdots \\
 -\sqrt{2} & | & -2 &-2 & \cdots & \cdots & \vdots\\
		\vdots & |  & \vdots & \vdots & \ddots & \vdots & \vdots \\
	- \sqrt 2 & |  & -2  & \cdots  & \cdots & - 2 & \lambda-1  \\
	- & -  & - &- & - & - & - \\
	-\sqrt{n} & |  & 0 & \cdots & \cdots &0&0
\end{bmatrix} \\
& \qquad \qquad \qquad \qquad + (-1)^{n+3}(\lambda-(n-1)) \det \begin{bmatrix}
\lambda & | & -\sqrt{2}  & -\sqrt{2}  & \cdots &-\sqrt{2}  \\
- & -  & - &- & -& -   \\
-\sqrt{2} & |   & \lambda - 1 &-2& \cdots &-2  \\
\vdots & |  & \vdots & \vdots & \vdots & \vdots  \\
-\sqrt{2} & |  & -2& \cdots  & -2& \lambda - 1    \\  
\end{bmatrix} \\ 
& ~~= (-1)^{(2+\frac{n+3}{2}+2+\frac{n+1}{2})} n (\lambda+1)^{\frac{n-3}{2}}\cdot(\lambda-(n-2)) + (-1)^{n+3} (\lambda-(n-1)) \det (\lambda I - \bar{A})\\
& ~~=(-1)\cdot n \cdot(\lambda+1)^{\frac{n-3}{2}}\cdot(\lambda-(n-2)) +  (\lambda-(n-1)) \det (\lambda I - \bar{A}),
\end{align*}
where
 $$
\bar{A}:= \begin{bmatrix}
0 & | & \sqrt{2}  & \sqrt{2}  & \cdots &\sqrt{2}  \\
- & -  & - &- & -& -   \\
\sqrt{2} & |   & 1 &2& \cdots &2  \\
\vdots & |  & \vdots & \vdots & \vdots & \vdots  \\
\sqrt{2} & |  & 2& \cdots  & 2& 1    \\  
\end{bmatrix}.
$$

To calculate the characteristic polynomial of $\bar{A}$, we consider $\bar{A}$ as follows.
$$ \bar{A} = \begin{bmatrix}
0 & | & \sqrt{2}  & \sqrt{2}  & \cdots &\sqrt{2}  \\
- & -  & - &- & -& -   \\
\sqrt{2} & |   & 1 &2& \cdots &2  \\
\vdots & |  & \vdots & \vdots & \vdots & \vdots  \\
\sqrt{2} & |  & 2& \cdots  & 2& 1    \\  
\end{bmatrix} := \begin{bmatrix}
A & | & B \\ -- & | & -- \\ C & | & D
\end{bmatrix} (say).$$
By Lemma \ref{schur}, we have
$$\det (\lambda I - \bar{A}) = \det (\lambda I - ( 2 J_{\frac{n-1}{2}} - I_{\frac{n-1}{2}})) (\lambda - \Gamma_D(\sqrt 2 \cdot \mathbf{1}_{\frac{n-1}{2}})).$$
Since $\sqrt 2 \cdot \mathbf{1}_{\frac{n-1}{2}}$ is an eigenvector of the matrix $ 2 J_{\frac{n-1}{2}} - I_{\frac{n-1}{2}}$ corresponding to the eigenvalue $n-2$, by Lemma \ref{evmain}, we have $$\Gamma_D(\sqrt 2 \cdot \mathbf{1}_{\frac{n-1}{2}}) = \frac{||\sqrt 2 \cdot \mathbf{1}_{\frac{n-1}{2}}||^2}{\lambda -(n-2))} = \frac{n-1}{\lambda - (n-2)} .$$
Substituting this value of the main function in the above equation, we get 

\begin{align*}
	\det (\lambda I - \bar{A}) &= \det (\lambda I - ( 2 J_{\frac{n-1}{2}} - I_{\frac{n-1}{2}})) \Big(\lambda -  \frac{n-1}{\lambda - (n-2)}\Big) \\
	&= \det (\lambda I - ( 2 J_{\frac{n-1}{2}} - I_{\frac{n-1}{2}})) \Big(\frac{\lambda^2-\lambda(n-2)-(n-1)}{\lambda - (n-2)}\Big) \\
	&= (\lambda+1)^{(\frac{n-3}{2})} (\lambda-(n-2))\Big(\frac{\lambda^2-\lambda(n-2)-(n-1)}{\lambda - (n-2)}\Big)\\
 &=(\lambda+1)^{(\frac{n-3}{2})}(\lambda^2-\lambda(n-2)-(n-1)).
\end{align*}
which is the required characteristic polynomial of the matrix $\bar{A}$.
Therefore, \begin{align*}
    &P_{\widetilde{\mathcal A}(\mathrm{CSCom}(D_{2n}))}(\lambda)\\
    &~~=(-1)\cdot n \cdot(\lambda+1)^{\frac{n-3}{2}}\cdot(\lambda-(n-2)) +  (\lambda-(n-1)) (\lambda+1)^{(\frac{n-3}{2})}(\lambda^2-\lambda(n-2)-(n-1))\\
    &~~=(\lambda+1)^{\frac{n-3}{2}}\cdot\big(\lambda^3+(3-2n)\lambda^2+(n^2-5n+3)\lambda+(2n^2-4n+1)\big).
\end{align*}

\noindent (b)  If   $n$ and $\frac{n}{2}$  are even then, by Theorem 	\ref{ESCom-D2n}, we have 
$$\mathrm{CSCom}(D_{2n}) \cong \Delta[K_1, K_1, \underbrace{K_2, \dots, K_2}_{(\frac{n}{2}-1)\text{~times}}, K_{\frac{n}{2}}, K_{\frac{n}{2}}],$$
where $\Delta = K_2 \vee (K_1 \sqcup K_1 \sqcup K_{\frac{n}{2} - 1})$. Therefore,
$$\mathcal{A}(\Delta) = \begin{bmatrix}
	0 & 1 & 1      & \cdots & 1       & 1      & 1\\
	1 & 0 & 1      & \cdots & 1       & 1      & 1\\
	1 & 1 & 0      & \cdots & 1       & 0      & 0\\
	\vdots & \vdots     & \vdots & \ddots & \vdots & \vdots & \vdots\\
	1 & 1 &1       & \cdots & 0      & 0       & 0\\
	1 & 1 &0      & \cdots & 0       & 0       & 0\\
	1 & 1 &0      & \cdots & 0       & 0       & 0
\end{bmatrix}
\text{ and so }
\rho_{i,j} = \begin{cases}
	0, &\!\!\!\!\!\! \text{ if } i = j \text{ or } \\
	&\!\!\!\!\! 3 \leq i \leq \frac{n}{2} + 2 \text{ and } j =  \frac{n}{2} + 2,  \frac{n}{2} + 3  \text{ or }\\
	&\!\!\!\!\! i = \frac{n}{2} + 2,  \frac{n}{2} + 3   \text{ and } 3 \leq j \leq \frac{n}{2}+2\\
	1,  &\!\!\!\!\! \text{ otherwise. }
\end{cases}$$
Hence, by Theorem  \ref{t:specmain}, it follows that 
$$\spec(\mathrm{CSCom}(D_{2n})) =  \{\underbrace{-1,\dots,-1}_{\text{($\frac{3n}{2} - 3)$~times}}\} \cup \spec(\widetilde{\mathcal A}(\mathrm{CSCom}(D_{2n}))),$$ 
where $\widetilde{\mathcal A}(\mathrm{CSCom}(D_{2n}))$  is a matrix of size $\frac{n}{2} + 3$ given by 
$$\widetilde{\mathcal A}(\mathrm{CSCom}(D_{2n}))= \begin{bmatrix}
	0 &1        & \sqrt{2} &\sqrt 2 & \cdots &\sqrt{2} & \sqrt{\frac{n}{2}}  &\sqrt{\frac{n}{2}}\\
	1 &0        & \sqrt{2}  &\sqrt 2& \cdots &\sqrt{2} & \sqrt{\frac{n}{2}}  &\sqrt{\frac{n}{2}}\\
	\sqrt{2} &\sqrt{2} & 1       &2  & \cdots &2 & 0 & 0\\
 \sqrt 2& \sqrt 2& 2 & 1 & \cdots & 2 & 0 & 0\\
	\vdots & \vdots  & \vdots & \vdots & \ddots & \vdots  &  \vdots&\vdots\\
	\sqrt{2} &\sqrt{2}& 2 & \cdots  &2& 1 & 0  & 0\\
	\sqrt{\frac{n}{2}} &\sqrt{\frac{n}{2}}& 0 & \cdots &\cdots& 0 & \frac{n}{2} - 1 & 0\\
	\sqrt{\frac{n}{2}} &\sqrt{\frac{n}{2}}& 0 & \cdots &\cdots& 0 &0 &\frac{n}{2} - 1
\end{bmatrix}.$$

By taking the bottom right corner submatrix of size $2 \times 2$ as $D$, and applying Lemma \ref{schur}, putting $A=\lambda I-A'$, where $A'$ is the top left
submatrix of size $\left(\frac{n}{2}+1\right)\times\left(\frac{n}{2}+1\right)$ submatrix of $\widetilde{\mathcal{A}}(\mathrm{CSCom}(D_{2n})$ given earlier, we get 
\begin{align*}
P_{\widetilde{\mathcal A}(\mathrm{CSCom}(D_{2n}))}(\lambda)&=
\det\begin{bmatrix}
    \lambda-(\frac{n}{2}-1) & 0 \\
    0 & \lambda-(\frac{n}{2}-1)
\end{bmatrix} \times\\
&\det \Big( A - \begin{bmatrix}
    \sqrt{\frac{n}{2}} & \sqrt{\frac{n}{2}} \\
    \sqrt{\frac{n}{2}} & \sqrt{\frac{n}{2}} \\
    0 & 0 \\
    \vdots & \vdots\\
    0 & 0
\end{bmatrix}\begin{bmatrix}
    \frac{1}{\lambda-(\frac{n}{2}-1)} & 0 \\
    0 & \frac{1}{\lambda-(\frac{n}{2}-1)}
\end{bmatrix}\begin{bmatrix}
      \sqrt{\frac{n}{2}} &   \sqrt{\frac{n}{2}} & 0 & \cdots & 0 \\
        \sqrt{\frac{n}{2}} &   \sqrt{\frac{n}{2}} & 0 & \cdots & 0 
\end{bmatrix}\Big).
 \end{align*}
Letting  $a =  \frac{1}{\lambda-(\frac{n}{2}-1)}$, we get
\begin{align*}
P_{\widetilde{\mathcal A}(\mathrm{CSCom}(D_{2n}))}(\lambda)&=\frac{1}{a^2} \det \Big( A - \begin{bmatrix}
    \sqrt{\frac{n}{2}}a & \sqrt{\frac{n}{2}}a \\
    \sqrt{\frac{n}{2}}a & \sqrt{\frac{n}{2}}a \\
    0 & 0 \\
    \vdots & \vdots\\
    0 & 0
\end{bmatrix}\begin{bmatrix}
      \sqrt{\frac{n}{2}} &   \sqrt{\frac{n}{2}} & 0 & \cdots & 0 \\
        \sqrt{\frac{n}{2}} &   \sqrt{\frac{n}{2}} & 0 & \cdots & 0 
\end{bmatrix}\Big)\\
&= \frac{1}{a^2} \det \Big(A-\begin{bmatrix}
    na & na & 0 & \cdots & 0 \\
    na & na & 0 & \cdots & 0 \\
    0 & 0 & \vdots & \vdots & \vdots \\
    \vdots & \vdots & \vdots & \vdots & \vdots \\
    0 & 0 & \cdots & \cdots & 0
\end{bmatrix}\Big)\\
&=\frac{1}{a^2} \det \Big(\begin{bmatrix}
\lambda-na & -1-na& -\sqrt{2} & -\sqrt{2} & \cdots &-\sqrt{2}\\
-1-na & \lambda-na&-\sqrt{2} & -\sqrt{2} & \cdots &-\sqrt{2}\\
	-\sqrt{2}  & -\sqrt 2&  \lambda-1 & -2 & \cdots &-2 \\
 -\sqrt{2} & -\sqrt 2& -2 &  \lambda-1 & \cdots & -2 \\
	\vdots &\vdots& \vdots & \vdots & \ddots & \vdots \\
	-\sqrt{2} &-\sqrt 2& -2 &-2  & \cdots  &  \lambda-1 
\end{bmatrix}.
 \end{align*}
Again using Lemma \ref{schur}, this time taking the bottom right corner submatrix of size $\left(\frac{n}{2}-1\right)\times\left(\frac{n}{2}-1\right)$ as $D$, we get
\begin{align*}
&P_{\widetilde{\mathcal A}(\mathrm{CSCom}(D_{2n}))}(\lambda)=\frac{1}{a^2}(\lambda+1)^{\frac{n}{2}-2} (\lambda-(n-3))\times\\
&~~~~~~~~~~~~~\det \Big(\begin{bmatrix}
        \lambda-na & -1-na \\
        -1-na & \lambda-na
    \end{bmatrix}-\begin{bmatrix}\Gamma_{(\lambda I_{\frac{n}{2}-1}-D)}(\sqrt{2}\cdot \mathbf{1}_{\frac{n}{2}-1}) &\Gamma_{(\lambda I_{\frac{n}{2}-1}-D)}(\sqrt{2}\cdot \mathbf{1}_{\frac{n}{2}-1})  \\
  \Gamma_{(\lambda I_{\frac{n}{2}-1}-D)}(\sqrt{2}\cdot \mathbf{1}_{\frac{n}{2}-1})  & \Gamma_{(\lambda I_{\frac{n}{2}-1}-D)}(\sqrt{2}\cdot \mathbf{1}_{\frac{n}{2}-1}) 
    \end{bmatrix}\Big).
\end{align*}
    Since $ \sqrt{2}\cdot \mathbf{1}_{\frac{n}{2}-1}$ is an eigenvector of ${(\lambda I_{\frac{n}{2}-1}-D)}$ corresponding to the eigenvalue $n-3$, by Lemma \ref{evmain},  we have 
\begin{align*} 
& P_{\widetilde{\mathcal A}(\mathrm{CSCom}(D_{2n}))}(\lambda) \\
 &\indent =\frac{1}{a^2}(\lambda+1)^{\frac{n}{2}-2} (\lambda-(n-3)) \det \Big(\begin{bmatrix}
        \lambda-na & -1-na \\
        -1-na & \lambda-na
    \end{bmatrix}-\begin{bmatrix}\frac{n-2}{\lambda-(n-3)}&\frac{n-2}{\lambda-(n-3)}\\\frac{n-2}{\lambda-(n-3)}&\frac{n-2}{\lambda-(n-3)}
    \end{bmatrix}\Big)\\
&\indent=\frac{1}{a^2}(\lambda+1)^{\frac{n}{2}-2} (\lambda-(n-3))\Big[\Big((\lambda-na)-\frac{(n-2)}{\lambda-(n-3)}\Big)^2- \Big((-1-na)-\frac{(n-2)}{\lambda-(n-3)}\Big)^2\Big]\\
&\indent=\frac{1}{a^2}(\lambda+1)^{\frac{n}{2}-2} (\lambda-(n-3)) \Big(\lambda-1-2na-\frac{2(n-2)}{\lambda-(n-3)}\Big)(\lambda+1)\\
&\indent=\frac{1}{a^2}(\lambda+1)^{\frac{n}{2}-1} \Big((\lambda-1-2na)(\lambda-(n-3))-2(n-2)\Big)\\
&\indent=(\lambda-(\frac{n}{2}-1))(\lambda+1)^{\frac{n}{2}-1}\Big( (\lambda^3+(3-\frac{3n}{2})\lambda^2+(\frac{n^2}{2}-5n+3)\lambda+(\frac{5n^2}{2}-\frac{15n}{2}+1)\Big).
\end{align*}
Hence, the result follows.

\noindent (c)  If   $n$ even and $\frac{n}{2}$  is odd then, by Theorem 	\ref{ESCom-D2n}, we have 
$$\mathrm{CSCom}(D_{2n}) \cong \Delta[K_1, K_1, \underbrace{K_2, \dots, K_2}_{(\frac{n}{2}-1)\text{~times}}, K_{\frac{n}{2}}, K_{\frac{n}{2}}],$$
where $\Delta = K_2 \vee (K_2  \sqcup K_{\frac{n}{2} - 1})$. Therefore,
 $$\mathcal{A}(\Delta) = \begin{bmatrix}
 	0 & 1 & 1      & \cdots & 1       & 1      & 1\\
 	1 & 0 & 1      & \cdots & 1       & 1      & 1\\
 	1 & 1 & 0      & \cdots & 1       & 0      & 0\\
 	\vdots & \vdots     & \vdots & \ddots & \vdots & \vdots & \vdots\\
 	1 & 1 &1       & \cdots & 0      & 0       & 0\\
 	1 & 1 &0      & \cdots & 0       & 0       & 1\\
 	1 & 1 &0      & \cdots & 0       & 1       & 0
 \end{bmatrix}
 \text{ and so }
 \rho_{i,j} = \begin{cases}
 	0, &\!\!\!\!\!\! \text{ if } i = j \text{ or } \\
 	&\!\!\!\!\! 3 \leq i \leq \frac{n}{2} + 1 \text{ and } j =  \frac{n}{2} + 2,  \frac{n}{2} + 3  \text{ or }\\
 	&\!\!\!\!\! i = \frac{n}{2} + 2,  \frac{n}{2} + 3   \text{ and } 3 \leq j \leq \frac{n}{2}+1\\
 	1,  &\!\!\!\!\! \text{ otherwise. }
 \end{cases}$$
 Hence, by Theorem  \ref{t:specmain}, it follows that
 $$\spec(\mathrm{CSCom}(D_{2n})) =  \{\underbrace{-1,\dots,-1}_{\text{($ \frac{3n}{2} - 3)$~times}}\} \cup \spec(\widetilde{\mathcal A}(\mathrm{CSCom}(D_{2n}))),$$ 
 where $\widetilde{\mathcal A}(\mathrm{CSCom}(D_{2n}))$  is a matrix of size $\frac{n}{2} + 3$ given by 
  $$\widetilde{\mathcal A}(\mathrm{CSCom}(D_{2n}))= \begin{bmatrix}
	0 &1        & \sqrt{2} &\sqrt 2 & \cdots &\sqrt{2} & \sqrt{\frac{n}{2}}  &\sqrt{\frac{n}{2}}\\
	1 &0        & \sqrt{2}  &\sqrt 2& \cdots &\sqrt{2} & \sqrt{\frac{n}{2}}  &\sqrt{\frac{n}{2}}\\
	\sqrt{2} &\sqrt{2} & 1       &2  & \cdots &2 & 0 & 0\\
 \sqrt 2& \sqrt 2& 2 & 1 & \cdots & 2 & 0 & 0\\
	\vdots & \vdots  & \vdots & \vdots & \ddots & \vdots  &  \vdots&\vdots\\
	\sqrt{2} &\sqrt{2}& 2 & \cdots  &2& 1 & 0  & 0\\
	\sqrt{\frac{n}{2}} &\sqrt{\frac{n}{2}}& 0 & \cdots &\cdots& 0 & \frac{n}{2} - 1 & \frac{n}{2}\\
	\sqrt{\frac{n}{2}} &\sqrt{\frac{n}{2}}& 0 & \cdots &\cdots& 0 &\frac{n}{2} &\frac{n}{2} - 1
\end{bmatrix}.$$

As before, by taking $b=\frac{n}{2}-1$ and the bottom right corner submatrix of size $2 \times 2$ as $D$, and applying Lemma \ref{schur}, we get 
\begin{align*}
&P_{\widetilde{\mathcal A}(\mathrm{CSCom}(D_{2n}))}(\lambda)\\
&\indent = \det\begin{bmatrix}
    \lambda-b& -\frac{n}{2} \\
    -\frac{n}{2} & \lambda-b
\end{bmatrix}\det \Big( A - \begin{bmatrix}
    \sqrt{\frac{n}{2}} & \sqrt{\frac{n}{2}} \\
    \sqrt{\frac{n}{2}} & \sqrt{\frac{n}{2}} \\
    0 & 0 \\
    \vdots & \vdots\\
    0 & 0
\end{bmatrix}\begin{bmatrix}
    \lambda-b& -\frac{n}{2} \\
    -\frac{n}{2} & \lambda-b
\end{bmatrix}^{-1}\begin{bmatrix}
      \sqrt{\frac{n}{2}} &   \sqrt{\frac{n}{2}} & 0 & \cdots & 0 \\
        \sqrt{\frac{n}{2}} &   \sqrt{\frac{n}{2}} & 0 & \cdots & 0 
\end{bmatrix}\Big),
 \end{align*}

\begin{align*}
&=\Big((\lambda-b)^2-{\frac{n}{4}}^2\Big)\det \Big( A - \begin{bmatrix}
    \sqrt{\frac{n}{2}} & \sqrt{\frac{n}{2}} \\
    \sqrt{\frac{n}{2}} & \sqrt{\frac{n}{2}} \\
    0 & 0 \\
    \vdots & \vdots\\
    0 & 0
\end{bmatrix}\frac{1}{\Big((\lambda-b)^2-\frac{n^2          }{4}\Big)}\begin{bmatrix}
  \lambda-b & \frac{n}{2}\\
  \frac{n}{2} & \lambda-b
  \end{bmatrix}\begin{bmatrix}
      \sqrt{\frac{n}{2}} &   \sqrt{\frac{n}{2}} & 0 & \cdots & 0 \\
        \sqrt{\frac{n}{2}} &   \sqrt{\frac{n}{2}} & 0 & \cdots & 0 
\end{bmatrix}\Big)\\
&=\Big((\lambda-b)^2-\frac{n  ^2       }{4}\Big)\det \Big( A - \frac{1}{\Big((\lambda-b)^2-\frac{n  ^2       }{4}\Big)}\begin{bmatrix}
    \sqrt{\frac{n}{2}} & \sqrt{\frac{n}{2}} \\
    \sqrt{\frac{n}{2}} & \sqrt{\frac{n}{2}} \\
    0 & 0 \\
    \vdots & \vdots\\
    0 & 0
\end{bmatrix}\begin{bmatrix}
  \lambda-b & \frac{n}{2}\\
\frac{n}{2} & \lambda-b
  \end{bmatrix}\begin{bmatrix}
      \sqrt{\frac{n}{2}} &   \sqrt{\frac{n}{2}} & 0 & \cdots & 0 \\
        \sqrt{\frac{n}{2}} &   \sqrt{\frac{n}{2}} & 0 & \cdots & 0 
\end{bmatrix}\Big)
\end{align*}
\begin{align*}
&=\Big((\lambda-b)^2-\frac{n ^2        }{4}\Big)\times\\
&~~~~~~~~~~~~~~\det \Big( A - \frac{1}{\Big((\lambda-b)^2-\frac{n  ^2       }{4}\Big)}\begin{bmatrix}
    2\Big((\lambda-b)\frac{n}{2}+(\frac{n^2}{4})\Big) &   2\Big((\lambda-b)\frac{n}{2}+(\frac{n^2}{4})\Big)  & 0 & \cdots & 0 \\
     2\Big((\lambda-b)\frac{n}{2}+(\frac{n^2}{4})\Big)  &   2\Big((\lambda-b)\frac{n}{2}+(\frac{n^2}{4
})\Big)       & 0 & \cdots & 0 \\
    0 & 0 & \vdots & \vdots & \vdots \\
    \vdots & \vdots & \vdots & \vdots & \vdots \\
    0 & 0 & \cdots & \cdots & 0
\end{bmatrix}\Big)\\
&=\Big((\lambda-b)^2-\frac{n       ^2  }{4}\Big)\det \Big(\begin{bmatrix}
\lambda-\frac{  2\Big((\lambda-b)\frac{n}{2}+(\frac{n^2}{4})\Big)}{\Big((\lambda-b)^2-\frac{n ^2        }{4}\Big)} & -1-\frac{  2\Big((\lambda-b)\frac{n}{2}+(\frac{n^2}{4})\Big)}{\Big((\lambda-b)^2-\frac{n    ^2     }{4}\Big)}& -\sqrt{2} & -\sqrt{2} & \cdots &-\sqrt{2}\\
-1- \frac{  2\Big((\lambda-b)\frac{n}{2}+(\frac{n^2}{4})\Big)}{\Big((\lambda-b)^2-\frac{n   ^2      }{4}\Big)}& \lambda-\frac{  2\Big((\lambda-b)\frac{n}{2}+(\frac{n^2}{4})\Big)}{\Big((\lambda-b)^2-\frac{n    ^2      }{4}\Big)}&-\sqrt{2} & -\sqrt{2} & \cdots &-\sqrt{2}\\
	-\sqrt{2}  & -\sqrt 2&  \lambda-1 & -2 & \cdots &-2 \\
 -\sqrt{2} & -\sqrt 2& -2 &  \lambda-1 & \cdots & -2 \\
	\vdots &\vdots& \vdots & \vdots & \ddots & \vdots \\
	-\sqrt{2} &-\sqrt 2& -2 &-2  & \cdots  &  \lambda-1 
\end{bmatrix}\Big).
 \end{align*}
 
\noindent Again using Lemma \ref{schur}, this time taking the bottom right corner submatrix of size $\frac{n}{2}-1\times \frac{n}{2}-1$ as $D$, we get

\begin{align*}
&P_{\widetilde{\mathcal A}(\mathrm{CSCom}(D_{2n}))}(\lambda)\\
&=\Big((\lambda-b)^2-\frac{n  ^2       }{4}\Big)(\lambda+1)^{\frac{n}{2}-2} (\lambda-(n-3)) \det \Big(\begin{bmatrix}
        \lambda-\frac{  2\Big((\lambda-b)\frac{n}{2}+(\frac{n^2}{4})\Big)}{\Big((\lambda-b)^2-\frac{n   ^2      }{4}\Big)}& -1-\frac{  2\Big((\lambda-b)\frac{n}{2}+(\frac{n^2}{4})\Big)}{\Big((\lambda-b)^2-\frac{n ^2        }{4}\Big)} \\
        -1-\frac{  2\Big((\lambda-b)\frac{n}{2}+(\frac{n^2 }{4})\Big)}{\Big((\lambda-b)^2-\frac{n         }{2}\Big)} & \lambda-\frac{  2\Big((\lambda-b)\frac{n}{2}+(\frac{n^2}{4})\Big)}{\Big((\lambda-b)^2-\frac{n     ^2    }{4}\Big)}
    \end{bmatrix}\\
    &~~~~~~~~~~~-\begin{bmatrix}\Gamma_{\lambda I_{\frac{n}{2}-1} -D}(\sqrt{2}\cdot \mathbf{1}_{\frac{n}{2}-1}) &\Gamma_{\lambda I_{\frac{n}{2}-1} -D}(\sqrt{2}\cdot \mathbf{1}_{\frac{n}{2}-1})  \\
  \Gamma_{\lambda I_{\frac{n}{2}-1} -D}(\sqrt{2}\cdot \mathbf{1}_{\frac{n}{2}-1})  & \Gamma_{\lambda I_{\frac{n}{2}-1} -D}(\sqrt{2}\cdot \mathbf{1}_{\frac{n}{2}-1}) 
    \end{bmatrix}\Big).
\end{align*}
    Since $ \sqrt{2}\cdot \mathbf{1}_{\frac{n}{2}-1}$ is an eigenvector of ${\lambda I_{\frac{n}{2}-1} -D}$ corresponding to the eigenvalue $n-3$, By Lemma \ref{evmain},  we have 
\begin{align*}    
&P_{\widetilde{\mathcal A}(\mathrm{CSCom}(D_{2n}))}(\lambda)=\Big((\lambda-b)^2-\frac{n   ^2  }{4}\Big)(\lambda+1)^{\frac{n}{2}-2} (\lambda-(n-3))\\ 
&~~~~~~~~~~~ \times \det \Big(\begin{bmatrix}
        \lambda-\frac{  2\Big((\lambda-b)\frac{n}{2}+(\frac{n^2}{4})\Big)}{\Big((\lambda-b)^2-\frac{n   ^2      }{4}\Big)}& -1-\frac{  2\Big((\lambda-b)\frac{n}{2}+(\frac{n^2}{4})\Big)}{\Big((\lambda-b)^2-\frac{n ^2        }{4}\Big)} \\
        -1-\frac{  2\Big((\lambda-b)\frac{n}{2}+(\frac{n^2 }{4})\Big)}{\Big((\lambda-b)^2-\frac{n         }{2}\Big)} & \lambda-\frac{  2\Big((\lambda-b)\frac{n}{2}+(\frac{n^2}{4})\Big)}{\Big((\lambda-b)^2-\frac{n     ^2    }{4}\Big)}
    \end{bmatrix}-\begin{bmatrix}\frac{n-2}{\lambda-(n-3)} &\frac{n-2}{\lambda-(n-3)} \\
 \frac{n-2}{\lambda-(n-3)}  &\frac{n-2}{\lambda-(n-3)}
    \end{bmatrix}\Big) \\
    &=(\lambda+1)^{\frac{n}{2}-1}
    \big(\lambda^4 + (4-2n)\lambda^3 + (n^2-8n+6) \lambda^2 + (4n^2-14n+4) \lambda +(3n^2-8n+1) \big).
    \end{align*}
Hence, the result follows.
\end{proof}

\begin{theorem}\label{S-CSCom-Q4n}
Let  $Q_{4n} = \langle a, b : a^{2n} = e, a^n =b^2, bab^{-1} = a^{-1}\rangle$ be the dicyclic group of order $4n$. 
\begin{enumerate}
\item If   $n$ is even then $\spec(\mathrm{CSCom}(Q_{4n})) = \{ \, \underbrace{-1,\dots,-1}_{\text{$(4n - 4)${\rm~times}}}, \, n -1, \alpha, \beta, \gamma \,\}$, where $\alpha, \beta, \gamma$ are the roots of the equation $x^3+(3-3n)x^2+(2n^2-10n+3)x+10n^2-15n+1 = 0$.
\item If   $n$  is odd then $\spec(\mathrm{CSCom}(Q_{4n})) = \{ \, \underbrace{-1,\dots,-1}_{\text{$(4n - 4)${\rm~times}}} \, , \alpha, \beta, \gamma, \delta \,\}$, where $\alpha, \beta, \gamma, \delta$ are the roots of the equation $x^4 + (4-4n)x^3 + (4n^2-16n+6)x^2 + (16n^2-28n+4)x +12n^2-16n+1 = 0$.
\end{enumerate} 
\end{theorem}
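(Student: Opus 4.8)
The plan is to reduce the statement to Theorem~\ref{S-CSCom-D2n} rather than redo the main-function computation from scratch. The first step is to observe that $\mathrm{CSCom}(Q_{4n})$ and $\mathrm{CSCom}(D_{4n})$ are isomorphic graphs. Writing $D_{4n} = D_{2m}$ with $m = 2n$, the index $m$ is automatically even and $\tfrac{m}{2} = n$, so the three cases of Theorem~\ref{ESCom-D2n} for $\mathrm{CSCom}(D_{2m})$ collapse to two: the subcase ``$m$ and $\tfrac m2$ even'' becomes ``$n$ even'' and gives $\Delta = K_2 \vee (K_1 \sqcup K_1 \sqcup K_{n-1})$, while ``$m$ even, $\tfrac m2$ odd'' becomes ``$n$ odd'' and gives $\Delta = K_2 \vee (K_2 \sqcup K_{n-1})$; in both cases the list of complete graphs is $K_1, K_1, \underbrace{K_2, \dots, K_2}_{(n-1)\text{ times}}, K_n, K_n$. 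Comparing with the description of $\mathrm{CSCom}(Q_{4n})$ in the same theorem, the joined-union data agree, so the graphs are isomorphic. This is the exact analogue of the isomorphism $\mathrm{ESCom}(Q_{4n}) \cong \mathrm{ESCom}(D_{2\times 2n})$ exploited in the proof of Theorem~\ref{S-ESCom-Q4n}.

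Granting the isomorphism, the spectrum of $\mathrm{CSCom}(Q_{4n})$ is obtained from Theorem~\ref{S-CSCom-D2n} by the substitution $m = 2n$. When $n$ is even, part~(b) of that theorem applies: it contributes $-1$ with multiplicity $2m-4 = 4n-4$, the eigenvalue $\tfrac m2 - 1 = n - 1$, and the three roots of $x^3 + (3 - \tfrac{3m}{2})x^2 + (\tfrac{m^2}{2} - 5m + 3)x + \tfrac{5m^2}{2} - \tfrac{15m}{2} + 1$, which at $m = 2n$ is exactly $x^3 + (3-3n)x^2 + (2n^2 - 10n + 3)x + 10n^2 - 15n + 1$. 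When $n$ is odd, part~(c) applies and contributes $-1$ with multiplicity $2m - 4 = 4n - 4$ together with the four roots of $x^4 + (4-2m)x^3 + (m^2 - 8m + 6)x^2 + (4m^2 - 14m + 4)x + 3m^2 - 8m + 1$, which at $m = 2n$ is the quartic displayed in part~(b) of the present theorem. A quick count confirms the total multiplicity is $4n = |Q_{4n}|$ in each case.

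The only point requiring attention is the bookkeeping: checking that the parity correspondence between the ``$m$''-cases and the ``$n$''-cases is exactly a bijection, and that the substitution $m = 2n$ is carried consistently through every coefficient; this is routine. Should a self-contained proof be preferred, one can instead run the Schur-complement and main-function argument of Theorem~\ref{S-CSCom-D2n} verbatim on the matrices $\widetilde{\mathcal A}(\mathrm{CSCom}(Q_{4n}))$ supplied by Theorem~\ref{t:specmain}: up to the relabelling of $\tfrac n2$ by $n$, these matrices have the same block shape --- a bordered middle block of the form $2J - I$, $2\times 2$ diagonal blocks built from $n-1$ (with, in the $n$ odd case, an off-diagonal entry $\tfrac n2$), and rank-one border vectors proportional to $\sqrt{n}\,\mathbf 1$ --- so Lemmas~\ref{schur}, \ref{mdl} and \ref{evmain} apply in the same order and produce the factor $(\lambda+1)^{n-1}$ from $\widetilde{\mathcal A}$, which together with the $-1$'s contributed by Theorem~\ref{t:specmain} yields the multiplicity $4n-4$, times a cubic (resp.\ quartic). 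I expect the isomorphism route to be cleanest, so there is no real obstacle beyond the case check.
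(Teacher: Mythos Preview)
Your proposal is correct and follows essentially the same approach as the paper: observe from Theorem~\ref{ESCom-D2n} that $\mathrm{CSCom}(Q_{4n}) \cong \mathrm{CSCom}(D_{2\times 2n})$, then read off the spectrum from Theorem~\ref{S-CSCom-D2n} via the substitution $m=2n$. The paper's own proof is the one-line version of this; your explicit check of the parity correspondence and of the substituted coefficients is a welcome expansion of that argument.
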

\begin{proof}
By Theorem \ref{ESCom-D2n}  we have 	$\mathrm{CSCom}(Q_{4n}) \cong \mathrm{CSCom}(D_{2\times 2n})$. Hence, the result follows from Theorem \ref{S-CSCom-D2n}.
\end{proof}

 Notice that the cubic and quartic  equations appearing in Theorems \ref{S-ESCom-D2n} -- \ref{S-CSCom-Q4n} have non-integral roots. For instances, the equations $x^3 -3x^2 -11x + 15 = 0$ has roots $\alpha \approx -2.81114, \beta \approx 1.14307, \gamma \approx 4.66807$; $x^3 -5x^2 -13x + 41 = 0$ has roots $\alpha \approx -3.17226, \beta \approx 2.14399, \gamma \approx 6.02827$; $x^3 -7x^2 + 3x + 31 = 0$ has roots $\alpha \approx -1.72119, \beta \approx 3.35861, \gamma \approx 5.36258$; $x^4 -8x^3 -6x^2+ 64x + 61 = 0$ has roots $\alpha = -1, \beta \approx -2.2034, \gamma = 3.6798, \delta = 7.5236$ etc.

\paragraph{Acknowledgement} 
The first author acknowledges the NFIG grant of the Indian Institute of
Technology Madras RF/22-23/0985/MA/NFIG/009003 and the SERB Startup Research
Grant SRG/2022/001281. The authors are grateful to the referee of an earlier
version for helpful comments.

\end{document}